\definecolor {refcol}{RGB}{40,0,255}
\newfont{\footsc}{cmcsc10 at 8truept}
\newfont{\footbf}{cmbx10 at 8truept}
\newfont{\footrm}{cmr10 at 10truept}
\newtheorem{theorem}{Theorem}
\newtheorem{conjecture}[theorem]{Conjecture}
\newtheorem{lemma}[theorem]{Lemma}
\newtheorem{proposition}[theorem]{Proposition}
\newenvironment{proof}[1][Proof]{\noindent{\textbf {#1}  }}  {\hfill$\Box$\bigskip}
\begin{document}

\title{\textbf{Symmetric functions and the principal case of the Frankl-F\"{u}redi
conjecture}}
\author{V. Nikiforov\thanks{Department of Mathematical Sciences, University of
Memphis, Memphis TN 38152, USA. Email: \textit{vnikifrv@memphis.edu}}}
\date{}
\maketitle

\begin{abstract}
Let $r\geq3$ and $G$ be an $r$-uniform hypergraph with vertex set $\left\{
1,\ldots,n\right\}  $ and edge set $E$. Let%
\[
\mu\left(  G\right)  :=\max%
%TCIMACRO{\tsum \limits_{\left\{  i_{1},\ldots,i_{r}\right\}  \in E}}%
%BeginExpansion
{\textstyle\sum\limits_{\left\{  i_{1},\ldots,i_{r}\right\}  \in E}}
%EndExpansion
x_{i_{1}}\cdots x_{i_{r}},
\]
where the maximum is taken over all nonnegative $x_{1},\ldots,x_{n}$ with
$x_{1}+\cdots+x_{n}=1.$

Let $t\geq r-1$ be the unique real number such that $\left\vert E\right\vert
=\binom{t}{r}$. It is shown that if $r\leq5$ or $t\geq4\left(  r-1\right)
\left(  r-2\right)  $, then%
\[
\mu\left(  G\right)  \leq t^{-r}\binom{t}{r}%
\]
with equality holding if and only if $t$ is an integer.

The proof is based on some new bounds on elementary symmetric
functions.$\medskip$

\textbf{Keywords:}\textit{ hypergraph; MS-index; Frankl-F\"{u}redi's
conjecture; elementary symmetric functions; Maclaurin's inequality.}

\textbf{AMS classification: }\textit{05C65, 05D99}

\end{abstract}

\section{Introduction and main results}

Let $G$ be an $r$-uniform hypergraph ($r$\emph{-graph} for short) with vertex
set $V$ and edge set $E$. Assume that $V=\left[  n\right]  :=\left\{
1,\ldots,n\right\}  $ and let $\mathbf{x}:=\left(  x_{1},\ldots,x_{n}\right)
\in\mathbb{R}^{n}$. Write $P_{G}\left(  \mathbf{x}\right)  $ for the
polynomial form of $G$
\[
P_{G}\left(  \mathbf{x}\right)  :=%
%TCIMACRO{\tsum \limits_{\left\{  i_{1},\ldots,i_{r}\right\}  \in E}}%
%BeginExpansion
{\textstyle\sum\limits_{\left\{  i_{1},\ldots,i_{r}\right\}  \in E}}
%EndExpansion
x_{i_{1}}\cdots x_{i_{r}},
\]
and let%
\[
\mu\left(  G\right)  :=\max_{\Delta^{n-1}}P_{G}\left(  \mathbf{x}\right)  ,
\]
where $\Delta^{n-1}\subset\mathbb{R}^{n}$ is the standard simplex:
\[
\Delta^{n-1}=\left\{  \mu\left(  G\right)  :\left(  x_{1},\ldots,x_{n}\right)
:x_{1}\geq0,\ldots,x_{n}\geq0\text{ and }x_{1}+\cdots+x_{n}=1\right\}  .
\]

We call $\mu\left(  G\right)  $ the \emph{MS-index }of $G$ in honor of Motzkin
and Straus, who introduced and studied $\mu\left(  G\right)  $ for $2$-graphs
in \cite{MoSt65}\footnote{The MS-index is commonly called the
\emph{Lagrangian}, a misty term that denies credit to Motzkin and Straus,
while Lagrange has nothing to do with the concept. Besides, the term
\emph{Lagrangian} has seven or so other meanings already in use elsewhere.}.
Let us note that the MS-index has a long-standing history in extremal
hypergraph theory (see, e.g., \cite{FrFu89} and \cite{Kee11} for more detailed discussion.)

Now, let
\[
\mu_{r}\left(  m\right)  :=\max\left\{  \mu\left(  G\right)  :G\text{ is an
}r\text{-graph with }m\text{ edges}\right\}  .
\]

The problem of finding $\mu_{r}\left(  m\right)  $ was first raised in 1989,
by Frankl and F\"{u}redi \cite{FrFu89}, who conjectured the exact value of
$\mu_{r}\left(  m\right)  $. During the years, their conjecture proved to be
rather hard: notwithstanding that it has been confirmed for most values of $m$
(see \cite{Tal02},\cite{TPWP16},\cite{Tyo17}), its toughest and most delicate
cases are still open.

However, even if completely solved, Frankl and F\"{u}redi's conjecture does
not provide an easy-to-use, closed-form expression for $\mu_{r}\left(
m\right)  $. In this regard, the following conjecture might be of interest:

\begin{conjecture}
\label{con1}Let $r\geq3$ and $G$ be an $r$-graph with $m$ edges. If $t\geq
r-1$ is the unique real number such that $m=\binom{t}{r}$, then $\mu\left(
G\right)  \leq mt^{-r},$ with equality if and only if $t$ is an integer.
\end{conjecture}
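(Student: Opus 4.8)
The plan is a double induction: on the uniformity $r$, and for each fixed $r$ on the number of edges $m$. The base cases are $r=1$ (where $\mu(G)=1=m/t$ trivially) and $r=2$: there Motzkin--Straus gives $\mu(G)=\tfrac12\bigl(1-1/\omega(G)\bigr)$, while $K_{\omega(G)}\subseteq G$ forces $\binom{\omega(G)}{2}\le m=\binom{t}{2}$ and hence $\omega(G)\le t$, so $\mu(G)\le\tfrac12(1-1/t)=mt^{-2}$, with equality precisely when $\omega(G)=t\in\mathbb Z$. For the inductive step, fix $r\ge3$ and an $r$-graph $G$ with $m=\binom{t}{r}$ edges, and choose an optimal weight vector $\mathbf x$ realizing $\mu(G)$ with inclusion-minimal support, which after relabelling we take to be $[k]$. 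Then every vertex of $[k]$ has positive degree (otherwise the mass of a degree-$0$ vertex could be moved away without lowering $P_G$, contradicting minimality), and if some edge of $G$ lay outside $[k]$ then $G[[k]]$ would have $m'<m$ edges and the same value, so the induction hypothesis on $m$ together with the monotonicity of $x\mapsto\binom{x}{r}x^{-r}$ on $[r,\infty)$ would already give the (strict) bound; thus assume every edge of $G$ lies in $[k]$. The Lagrange conditions at the optimum read $\partial_i P_G(\mathbf x)=r\,\mu(G)$ for all $i\in[k]$, and $\partial_k P_G=P_{L_k}$, where $L_k$ is the $(r-1)$-uniform link of $k$ in $G$.

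If $k\le t$, then $\mu(G)\le\mu(K_k^{(r)})=\binom{k}{r}k^{-r}\le\binom{t}{r}t^{-r}$ by the same monotonicity, with equality forcing $k=t\in\mathbb Z$ and $G=K_t^{(r)}$. So assume $k>t$, i.e.\ $k\ge\ceil{t}$; then $k>r$ (else $m\le1$ and $t=r\in\mathbb Z$), so after relabelling we may take $x_k=\min_i x_i\le1/k<1/r$. Put $d:=\deg_G(k)$; then $1\le d<m$ ($d\ge1$ as above, and $d=m$ would leave $G-k$ empty, forcing $\mu(G)(1-rx_k)=0$ and hence $x_k=1/r$, contradicting $x_k<1/r$). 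Let $t_1,t_2\ge r-1$ be the unique reals with $\binom{t_1}{r}=m-d$ and $\binom{t_2}{r-1}=d$. Splitting the edges of $G$ by whether they contain $k$,
\[
\mu(G)=P_G(\mathbf x)=P_{G-k}(\mathbf x)+x_k\,P_{L_k}(\mathbf x)=P_{G-k}(\mathbf x)+r\,x_k\,\mu(G),
\]
so $\mu(G)\,(1-r\,x_k)=P_{G-k}(\mathbf x)$. Rescaling the coordinates on $[k-1]$ to sum to $1$ gives $P_{G-k}(\mathbf x)\le(1-x_k)^r\,\mu(G-k)$ and $P_{L_k}(\mathbf x)\le(1-x_k)^{r-1}\,\mu(L_k)$; the induction hypotheses (on $m$ for $G-k$, on $r$ for $L_k$) give $\mu(G-k)\le(m-d)\,t_1^{-r}$ and $\mu(L_k)\le d\,t_2^{-(r-1)}$. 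Altogether,
\[
\mu(G)\ \le\ \min\!\left\{\ \frac{(1-x_k)^r}{1-r\,x_k}\,(m-d)\,t_1^{-r}\ ,\ \ \frac{(1-x_k)^{r-1}}{r}\,d\,t_2^{-(r-1)}\ \right\}.
\]

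It remains to show that this right-hand side is at most $\binom{t}{r}t^{-r}$ for every admissible $x_k\in(0,1/k]$, using the constraints tying the parameters together: $\binom{t_1}{r}+\binom{t_2}{r-1}=\binom{t}{r}$, $d\le\binom{k-1}{r-1}$, and $k\ge\ceil{t}$. This reduces to a one-variable optimization in $x_k$ in which the monotone-then-unimodal behaviour of the first factor must be balanced against the split $m=(m-d)+d$; the ``equality iff $t\in\mathbb Z$'' clause then follows by running the equality cases backwards, which pins down $G=K_t^{(r)}$ (together with isolated vertices). This last inequality is the main obstacle. The estimates used for $\mu(G-k)$ and $\mu(L_k)$ are only of Maclaurin strength, and in general they do not capture enough of the weight that the edges absent at the minimum-weight vertex $k$ are forced to carry; closing the gap requires genuinely sharper inequalities for elementary symmetric functions. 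The argument does go through when $r\le5$ (only a handful of parameter regimes occur) and when $t\ge4(r-1)(r-2)$ (the recursion parameters stay comfortably away from their degenerate values), and the remaining window $r\ge6$, $r-1\le t<4(r-1)(r-2)$ — where $t_1,t_2$ are close to $r-1$ and the denominator $1-rx_k$ is small — is precisely where the full conjecture still needs new symmetric-function bounds.
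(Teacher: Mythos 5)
Your argument has a genuine gap at exactly the point where all the difficulty lives. The double induction and the link decomposition $\mu(G)(1-rx_k)=P_{G-k}(\mathbf{x})$, together with the rescaled bounds $P_{G-k}(\mathbf{x})\le(1-x_k)^r\mu(G-k)$ and $P_{L_k}(\mathbf{x})\le(1-x_k)^{r-1}\mu(L_k)$, are all fine as far as they go, but they only reduce the conjecture to the closing inequality
\[
\min\left\{\ \frac{(1-x_k)^r}{1-rx_k}\,(m-d)\,t_1^{-r},\ \ \frac{(1-x_k)^{r-1}}{r}\,d\,t_2^{-(r-1)}\right\}\ \le\ \binom{t}{r}t^{-r},
\]
and you never prove it: you state that it is "the main obstacle," that the Maclaurin-strength estimates "do not capture enough," and then assert without any computation that "the argument does go through when $r\le5$" and when $t\ge4(r-1)(r-2)$. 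An assertion that the hard inequality holds in the cases of interest is not a proof of those cases; nothing in your write-up identifies the "handful of parameter regimes," carries out the one-variable optimization in $x_k$, or verifies the constraints $\binom{t_1}{r}+\binom{t_2}{r-1}=\binom{t}{r}$, $d\le\binom{k-1}{r-1}$, $k\ge t$ actually force the bound. The equality analysis ("running the equality cases backwards") is likewise only sketched. So even for the partial cases the paper establishes, your proposal is incomplete, and for the full conjecture it is, by your own admission, open.

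For contrast, the paper does not induct on $r$ or $m$ at all. It works with a single optimal vector $\mathbf{x}$ for $G$ itself: Lemma \ref{tsin} (an entropy-type argument using the Lagrange conditions and convexity of $x\log x$) gives $\mu(G)\le m\,\sigma(\mathbf{x})^r$, so if $\mu(G)>mt^{-r}$ then $\sigma>1/t$; the eigenvector equation also forces $x_1\le 1/r$; and then the new symmetric-function bounds — Theorem \ref{tlub} bounding $\partial S_r/\partial x_1$ by $r\sigma^r\binom{1/\sigma}{r}$ for $3\le r\le5$, or Theorem \ref{tub} bounding $S_r$ for $x_1\le 1/4(r-2)$ — combined with $P_G\le S_r$ yield a contradiction. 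Those analytic inequalities (proved via Chebyshev-type recurrences on $S_k$) are precisely the "genuinely sharper inequalities for elementary symmetric functions" your sketch says are needed but does not supply. If you want to salvage your induction route, you would have to actually prove the displayed closing inequality in the claimed regimes, which is likely to be at least as delicate as the paper's Theorems \ref{tub} and \ref{tlub}.
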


Note that the value of $\mu_{r}\left(  m\right)  $ conjectured by Frankl and
F\"{u}redi is quite close to $mt^{-r}$, and moreover, both values coincide if
$t$ is an integer. Tyomkyn \cite{Tyo17} called the latter case the\emph{
principal case\ }of the Frankl-F\"{u}redi's conjecture, and solved it for any
$r\geq4$ and $m$ sufficiently large; prior to that, Talbot \cite{Tal02} had
solved the principal case for $r=3$ and any $m$. Let us note that Talbot and
Tyomkyn contribute much more than the mentioned results, but neither of these
works imply a complete solution to Conjecture \ref{con1} for any $r$.

In this paper, we confirm Conjecture \ref{con1} whenever $3\leq r\leq5,$
thereby completely resolving the principal case of the Frankl-F\"{u}redi's
conjecture for these values of $r$. In addition, we show that Conjecture
\ref{con1} holds whenever $t\geq4\left(  r-1\right)  \left(  r-2\right)  $,
thereby giving an alternative proof of Tyomkyn's result and providing an
explicit bound\footnote{This bound is chosen for simplicity, and can be cut at
least by half. In contrast, Tyomkyn's proof of the principal case of the
Frankl-F\"{u}redi's conjecture provides no explicit bounds.}.

Our proofs are based on some seemingly novel bounds on elementary symmetric
functions, which, somewhat surprisingly, are just analytic results with no
relation to hypergraphs whatsoever. Theorems \ref{tub}, \ref{tlb}, and
\ref{tlub} below present the gist of this approach.\medskip

Given a vector $\mathbf{x}:=\left(  x_{1},\ldots,x_{n}\right)  $, write
$S_{k}\left(  \mathbf{x}\right)  $ for the $k$th elementary symmetric function
of $x_{1},\ldots,x_{n}$. Set $q\left(  \mathbf{x}\right)  :=x_{1}^{2}%
+\cdots+x_{n}^{2},$ and for nonnegative $\mathbf{x}$ set $\sigma\left(
\mathbf{x}\right)  :=x_{1}^{x_{1}}\cdots x_{n}^{x_{n}}$, with the caveat that
$0^{0}=1$.

Let us recall the most important case of the celebrated Maclaurin inequality
(\cite{M1729}, see also \cite{HLP88}, Theorem 52), that can be stated as:
\medskip

\emph{If }$k\geq2$\emph{ and }$\mathbf{x}\in\Delta^{n-1}$\emph{, then}
\begin{equation}
S_{k}\left(  \mathbf{x}\right)  <n^{-k}\binom{n}{k}, \label{MI}%
\end{equation}
\emph{unless the entries of }$\mathbf{x}$\emph{ are equal.\medskip}

Since $\mathbf{x}\in\Delta^{n-1}$ implies that $\sigma\left(  \mathbf{x}%
\right)  \geq1/n$, the following theorem strengthens inequality (\ref{MI})
under a mild restriction on the maximum entry of $\mathbf{x}$ (denoted by
$\left\vert \mathbf{x}\right\vert _{\max}$ hereafter):

\begin{theorem}
\label{tub}Let $k\geq3,$ $\mathbf{x}\in\Delta^{n-1}$, and $\sigma
=\sigma\left(  \mathbf{x}\right)  .$ If $\left\vert \mathbf{x}\right\vert
_{\max}\leq1/4\left(  k-2\right)  $, then
\[
S_{k}\left(  \mathbf{x}\right)  <\sigma^{k}\binom{1/\sigma}{k},
\]
unless the nonzero entries of $\mathbf{x}$ are equal.
\end{theorem}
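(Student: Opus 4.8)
The plan is to compare $S_k(\mathbf{x})$ with the function $f(\sigma):=\sigma^k\binom{1/\sigma}{k}$ by viewing both sides as functions of the entries and tracking a suitable "local move." Observe first that $\sigma^k\binom{1/\sigma}{k}=\tfrac{1}{k!}\prod_{j=0}^{k-1}(1-j\sigma)$, so the claimed inequality is
\[
k!\,S_k(\mathbf{x})<\prod_{j=0}^{k-1}(1-j\sigma(\mathbf{x})).
\]
Since $|\mathbf{x}|_{\max}\le 1/4(k-2)$, every factor $1-j\sigma$ with $j\le k-1$ is positive: indeed $\sigma\le|\mathbf{x}|_{\max}\le 1/4(k-2)$ forces $(k-1)\sigma<1$. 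Thus the right-hand side is a genuine positive product, and the strategy is to prove the inequality factor by factor against the Newton-type expansion of $k!\,S_k$.

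The key step I would carry out is a symmetrization/compression argument. Fix the support size and the number of nonzero coordinates; among all $\mathbf{x}$ with a given value of $q(\mathbf{x})=\sum x_i^2$ (equivalently a given $\sum\binom{?}{}$), or more robustly, among all $\mathbf{x}$ with $\sigma(\mathbf{x})$ fixed, I would show that $S_k$ is maximized when the nonzero entries are as equal as possible — this is where the constraint $|\mathbf{x}|_{\max}\le 1/4(k-2)$ is consumed, since it guarantees we stay in a region where the relevant second-order terms have the right sign. Concretely, for two unequal nonzero coordinates $x_i>x_j$, replacing $(x_i,x_j)$ by $(x_i-\varepsilon,x_j+\varepsilon)$ changes $S_k$ by $\varepsilon\,(x_i-x_j)\,S_{k-2}(\mathbf{x}^{(ij)})+O(\varepsilon^2)$ with the appropriate sign, while the effect on $\sigma$ is controlled; iterating drives $\mathbf{x}$ toward the uniform vector on its support, where both sides can be computed explicitly and Maclaurin's inequality (\ref{MI}) applied on the support. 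The delicate point is that $\sigma$ is not constant along this move, so I would instead parametrize by $\sigma$ and bound $dS_k/d(\text{move})$ against $d f(\sigma)/d(\text{move})$, or — cleaner — prove the differential inequality $S_k(\mathbf{x})/f(\sigma(\mathbf{x}))$ is nonincreasing under the equalizing move.

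The main obstacle I anticipate is exactly this coupling between $S_k$ and $\sigma$: the function $\sigma(\mathbf{x})=\prod x_i^{x_i}$ has a logarithmic derivative $\partial_{x_i}\log\sigma=1+\log x_i$, which blows up as $x_i\to 0$, so the equalizing move must be handled with care near coordinates that are small but nonzero. I expect to split into a main term and an error term, bound $\log\sigma$ from above and below by a quadratic in the $x_i$ (using $t\log t\ge t-1$ type estimates valid for $t\le 1/4(k-2)$), thereby reducing the problem to the already-understood behavior of $S_k$ versus $q(\mathbf{x})$, and then close the argument by invoking (\ref{MI}) in the equal-entries case to get the strict inequality and the stated equality condition. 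The hypothesis $k\ge 3$ enters because the product $\prod_{j=0}^{k-1}(1-j\sigma)$ has at least the nontrivial factor $(1-2\sigma)$, which is what makes the strengthening over (\ref{MI}) possible.
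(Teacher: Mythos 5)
Your proposal outlines a compression/local-move strategy, which is genuinely different from the paper's proof. The paper instead derives a chain of recurrence inequalities $iS_i\leq\bigl(1-(i-1)\sigma\bigr)S_{i-1}$ for $i=3,\ldots,k$ (Proposition~\ref{gub} plus the convexity of $z\mapsto e^z-(i-2)e^{2z}$ on the region $e^z\leq 1/4(i-2)$, which is exactly where the hypothesis on $|\mathbf{x}|_{\max}$ enters), and then multiplies them together. No equalization or compression is involved. So the two approaches are not just cosmetically different.

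However, your proposal has a genuine gap that prevents it from standing as a proof. The entire argument hinges on a differential inequality comparing how $S_k$ and $f(\sigma):=\sigma^k\binom{1/\sigma}{k}$ change under the move $(x_i,x_j)\mapsto(x_i-\varepsilon,x_j+\varepsilon)$ with $x_i>x_j$, and you never establish it; the phrases ``I expect'' and ``I anticipate'' sit precisely at the load-bearing step. Moreover the direction you state is reversed: under the equalizing move, one computes $\frac{d}{d\varepsilon}S_k\big|_{0}=(x_i-x_j)\,S_{k-2}^{(ij)}>0$ and $\frac{d}{d\varepsilon}\log\sigma\big|_{0}=\log(x_j/x_i)<0$, so \emph{both} $S_k$ and $f(\sigma)$ increase; since they coincide at the uniform endpoint, you need $S_k/f(\sigma)$ to be \emph{nondecreasing} along the equalizing move, not nonincreasing as you claim. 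Proving this requires the pointwise bound
\[
\frac{(x_i-x_j)\,S_{k-2}^{(ij)}}{S_k}\;\geq\;\sigma\log\frac{x_i}{x_j}\sum_{j=1}^{k-1}\frac{j}{1-j\sigma},
\]
which is by no means obvious and is where the hypothesis $|\mathbf{x}|_{\max}\leq 1/4(k-2)$ must actually be spent; you only gesture at this (``the relevant second-order terms have the right sign''). Finally, your endgame is off: at the uniform vector on the support, $S_k$ and $f(\sigma)$ are \emph{equal} by direct computation, so there is nothing for Maclaurin's inequality to do there. The strictness of the inequality must come from the strict monotonicity of the ratio along the move, not from invoking (\ref{MI}) at the endpoint. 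As written, the proposal is a plausible plan but not a proof; the key analytic lemma is both unproved and incorrectly stated.
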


It turns out that the bound in Theorem \ref{tub} is not an isolated exception,
but one of many interrelated similar bounds that avoid the parameter $n$
altogether (see the closing remarks of Section \ref{sym}.) In particular,
Theorem \ref{tub} can be matched with a very similar lower bound:

\begin{theorem}
\label{tlb}Let $k\geq3,$ $\mathbf{x}\in\Delta^{n-1}$, and $q=q\left(
\mathbf{x}\right)  $. If $\left\vert \mathbf{x}\right\vert _{\max}<1/\left(
k-1\right)  $, then
\[
S_{k}\left(  \mathbf{x}\right)  >q^{k}\binom{1/q}{k},
\]
unless the nonzero entries of $\mathbf{x}$ are equal.
\end{theorem}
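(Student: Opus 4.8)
The plan is to mirror whatever strategy underlies Theorem \ref{tub}, replacing the weight $\sigma(\mathbf{x})$ by $q(\mathbf{x})$ and reversing the inequality. Write $m$ for the number of nonzero entries of $\mathbf{x}$, so that $1/m \le q \le |\mathbf{x}|_{\max} < 1/(k-1)$; in particular $1/q > k-1$, so the generalized binomial coefficient $\binom{1/q}{k} = \frac{1}{k!}\prod_{j=0}^{k-1}(1/q - j)$ is positive and the right-hand side makes sense. The idea is to compare the true value $S_k(\mathbf{x})$ with the value it would take if all nonzero coordinates were equal to $q$ (note $q = \sum x_i^2$ plays here the role that $\sum x_i = 1$ plays in Maclaurin); in that ``equalized'' configuration one has exactly $q^k\binom{1/q}{k}$ as the value of the $k$th symmetric function, after rescaling. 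So the claim is a Schur-convexity / majorization statement: among all $\mathbf{x}\in\Delta^{n-1}$ with a prescribed value of $q$, the symmetric function $S_k$ is \emph{minimized} when the nonzero entries are equal, provided $|\mathbf{x}|_{\max} < 1/(k-1)$.

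First I would reduce to the case where all $x_i$ are strictly positive (drop the zero coordinates and relabel, so $n=m$), and then set up the one-step ``smoothing'' lemma: fix all coordinates except two, say $x_i$ and $x_j$, keeping $x_i + x_j$ fixed; then both $S_k$ and $q$ become functions of the single parameter $s := x_i x_j \in [0, ((x_i+x_j)/2)^2]$. Explicitly, $S_k = A + B s$ where $A = S_k(\mathbf{x}\setminus\{x_i,x_j\}) + (x_i+x_j)S_{k-1}(\mathbf{x}\setminus\{x_i,x_j\})$ and $B = S_{k-2}(\mathbf{x}\setminus\{x_i,x_j\})$, while $q = C - 2s$ with $C$ a constant; so along this segment $S_k$ is an affine function of $q$, namely $S_k = A + \tfrac{B}{2}(C - q)$, and the RHS $f(q) := q^k\binom{1/q}{k}$ is a fixed function of $q$. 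The key computational step will be to show $f$ is convex in $q$ on the relevant range $q \in (0, 1/(k-1))$: then an affine function of $q$ that agrees with — or rather, the comparison between the affine curve $S_k(q)$ and the convex curve $f(q)$ — forces the strict inequality at interior points once it is checked at the endpoints, i.e. at the fully-equalized configuration where the nonzero entries are all equal. Running this two-variable smoothing repeatedly (equalizing pairs of coordinates while monitoring that $|\mathbf{x}|_{\max}$ stays below $1/(k-1)$) drives $\mathbf{x}$ to the uniform point, and one tracks that $S_k$ strictly decreases at each genuine step while $f(q)$ behaves convexly, yielding $S_k(\mathbf{x}) > f(q(\mathbf{x}))$ strictly unless no smoothing is needed.

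The main obstacle I anticipate is twofold. First, establishing convexity (in fact the right one-sided convexity/monotonicity) of $f(q) = q^k\binom{1/q}{k} = \frac{1}{k!}\prod_{j=0}^{k-1}(1 - jq)$ as a function of $q$ on $(0,1/(k-1))$: this is a polynomial in $q$ of degree $k$ with all roots at $q = 1/j$ ($1\le j\le k-1$) lying at or beyond the right endpoint, so on the open interval it is positive; one needs $f'' \ge 0$ there, which should follow from writing $\log f$ or from the product structure, but requires care near $q$ close to $1/(k-1)$ where the factor $(1-(k-1)q)$ is small — this is exactly where the hypothesis $|\mathbf{x}|_{\max} < 1/(k-1)$ (hence $q < 1/(k-1)$) is doing its work, and the bound must be shown to be tight enough. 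Second, the bookkeeping in the iterated smoothing: one must choose the pair to equalize so that the constraint $|\mathbf{x}|_{\max} < 1/(k-1)$ is preserved throughout (equalizing the largest with another coordinate only decreases the max, so choosing the maximal coordinate as one of the pair handles this), and one must argue the process terminates at the uniform vector with a strict gain whenever $\mathbf{x}$ was not already uniform. I would handle the degenerate boundary behavior (what happens as a coordinate hits $0$ during smoothing, or the base case $m = k$ where $f(q) = q^k\binom{k}{k}\cdot\frac{(1/q-1)\cdots(1/q-k+1)}{\,\cdots\,}$ — checking the equality case directly) as separate short cases at the end.
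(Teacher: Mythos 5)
Your approach (iterated pairwise smoothing plus convexity of $f(q) := q^k\binom{1/q}{k}$) is a genuinely different route from the paper's, which derives Theorem~\ref{tlb} in a few lines from the recurrence $kS_k \geq (1-(k-1)q)S_{k-1}$ (Proposition~\ref{glb}), itself a short consequence of the weighted Chebyshev inequality applied to $\sum x_i\cdot x_i\cdot \partial S_{k-1}/\partial x_i$. However, as written the proposal has a sign error and two unresolved gaps. The sign error: equalizing $x_i$ and $x_j$ with $x_i+x_j$ fixed increases $s = x_i x_j$, hence increases $S_k = A + Bs$ (since $B = S_{k-2}(\mathbf{x}\setminus\{x_i,x_j\}) \geq 0$), and it decreases $q$, hence increases $f(q)$; your claim that ``$S_k$ strictly decreases at each genuine step'' is backwards, and the direction matters because it determines which endpoint comparison you actually need.

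The first gap is the convexity of $f$ on $(0,1/(k-1))$, which you flag as delicate but do not prove. It does hold: with $g = k!\,f = \prod_{j=1}^{k-1}(1-jq)$ and $a_j = j/(1-jq) > 0$ one has $g''/g = \left(\sum_j a_j\right)^2 - \sum_j a_j^2 = 2\sum_{i<j} a_ia_j > 0$, with no trouble near $q = 1/(k-1)$, but the proposal leaves this open. The second and more serious gap is the skeleton of the smoothing argument itself. Since $S_k$ is affine in $q$ along a smoothing segment and $f$ is convex, $S_k - f(q)$ is concave there, so its minimum is attained at an endpoint of the segment; but only one endpoint is the equalized pair, while the other has a coordinate driven to $0$ and therefore requires an induction on the number $n'$ of nonzero entries, together with a base case and a way to handle the fact that at the $s=0$ endpoint the largest coordinate may exceed $1/(k-1)$ (so the inductive hypothesis may not apply). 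You correctly flag this last issue but do not resolve it, and you do not set up the induction or verify a base case. Until those pieces are supplied the argument is incomplete, whereas the paper's recurrence sidesteps all of it: one multiplies $S_i \geq \tfrac{1}{i}(1-(i-1)q)S_{i-1}$ over $i=2,\dots,k$, cancels $S_2\cdots S_{k-1}$, and uses $S_1=1$.
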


The following theorem, crucial for tackling Conjecture \ref{con1},
incorporates an additional twist in order to weaken the constraint on
$\left\vert \mathbf{x}\right\vert _{\max}$:

\begin{theorem}
\label{tlub}Let $3\leq k\leq5,$ $\mathbf{x}:=\left(  x_{1},\ldots
,x_{n}\right)  \in\Delta^{n-1}$, and $\sigma=\sigma\left(  \mathbf{x}\right)
.$ If $x_{n}\leq\cdots\leq x_{1}\leq1/k$, then%
\begin{equation}
\frac{\partial S_{k}\left(  \mathbf{x}\right)  }{\partial x_{1}}<k\sigma
^{k}\binom{1/\sigma}{k}, \label{inr3}%
\end{equation}
unless the nonzero entries of $\mathbf{x}$ are equal.
\end{theorem}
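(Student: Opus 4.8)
The plan is to reduce the partial-derivative bound to Theorem \ref{tub} by a clever choice of auxiliary vector. Observe that $\partial S_k(\mathbf{x})/\partial x_1 = S_{k-1}(x_2,\ldots,x_n)$, the $(k-1)$st elementary symmetric function of the truncated vector. If we write $s := x_2+\cdots+x_n = 1-x_1$ and set $\mathbf{y} := (x_2,\ldots,x_n)/s \in \Delta^{n-2}$, then $S_{k-1}(x_2,\ldots,x_n) = s^{k-1}S_{k-1}(\mathbf{y})$. The hypothesis $x_1 \le 1/k$ gives $s = 1-x_1 \ge (k-1)/k$, and the entries of $\mathbf{y}$ are bounded by $x_1/s \le (1/k)/((k-1)/k) = 1/(k-1)$. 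To apply Theorem \ref{tub} to $\mathbf{y}$ with parameter $k-1$, I need $|\mathbf{y}|_{\max} \le 1/(4(k-3))$; for $k=3$ this is vacuous, for $k=4$ it requires $|\mathbf{y}|_{\max}\le 1/4$ which follows from $1/(k-1)=1/3<1/4$... wait, $1/3 > 1/4$, so this is \emph{not} immediate. This is where the restriction $k \le 5$ and the extra work come in: one must handle the regime where $x_1$ is close to $1/k$ separately, or track the entropy factor more carefully.

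So the refined plan has two parts. First, relate $\sigma(\mathbf{x})$ to $\sigma(\mathbf{y})$. Writing $\sigma = \sigma(\mathbf{x})$ and $\sigma_{\mathbf{y}} = \sigma(\mathbf{y})$, a direct computation gives $\log\sigma = x_1\log x_1 + \sum_{i\ge2} x_i\log x_i = x_1\log x_1 + s\log s + s\log\sigma_{\mathbf{y}}$, i.e. $\sigma = x_1^{x_1} s^s \sigma_{\mathbf{y}}^{s}$. The target inequality $S_{k-1}(x_2,\ldots,x_n) = s^{k-1}S_{k-1}(\mathbf{y}) < k\sigma^k\binom{1/\sigma}{k}$ must be derived from the Theorem \ref{tub} bound $S_{k-1}(\mathbf{y}) < \sigma_{\mathbf{y}}^{k-1}\binom{1/\sigma_{\mathbf{y}}}{k-1}$ (valid when the max-entry condition holds), which reduces to checking the purely analytic inequality
\[
s^{k-1}\,\sigma_{\mathbf{y}}^{k-1}\binom{1/\sigma_{\mathbf{y}}}{k-1} \le k\,\sigma^k\binom{1/\sigma}{k}
\]
as a function of the two scalars $s \in [(k-1)/k, 1)$ and $\sigma_{\mathbf{y}} \in (0,1]$, with $\sigma = x_1^{x_1}s^s\sigma_{\mathbf{y}}^s$ and $x_1 = 1-s$. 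Second, for the range of $x_1$ (equivalently $s$) where Theorem \ref{tub} does not directly apply to $\mathbf{y}$ with parameter $k-1$ (this only concerns $k=4,5$, where $1/(k-1)$ exceeds the threshold $1/(4(k-3))$), one argues directly: if some entry of $\mathbf{y}$ is large, then $x_1$ itself must be small, so a crude bound on $S_{k-1}(x_2,\ldots,x_n)$ — for instance via $S_{k-1} \le \binom{1}{k-1}$-type Maclaurin bounds or a Newton-inequality monotonicity argument — suffices against the right-hand side.

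I expect the main obstacle to be the second analytic inequality: showing $s^{k-1}\sigma_{\mathbf{y}}^{k-1}\binom{1/\sigma_{\mathbf{y}}}{k-1} \le k\sigma^k\binom{1/\sigma}{k}$ uniformly over the relevant $(s,\sigma_{\mathbf{y}})$ region. The function $\sigma \mapsto \sigma^k\binom{1/\sigma}{k}$ is increasing in $\sigma$ on $(0,1/k]$ and one has $\sigma \ge \sigma_{\mathbf{y}}^s \cdot s^s x_1^{x_1} \ge$ (something comparable to $\sigma_{\mathbf{y}}$ when $s$ is near $1$), so the inequality should hold with room to spare when $s$ is close to $1$, i.e. $x_1$ close to $0$; the tight regime is $x_1 = 1/k$ exactly, where equality in the conclusion should correspond to all nonzero entries of $\mathbf{x}$ being equal to $1/k$ (so there are exactly $k$ of them), which is the stated equality case. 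Verifying the inequality near this boundary — likely by expanding both sides to second order in the deviation of $\mathbf{y}$ from uniform, or by a convexity/Schur-type argument on the truncated vector — is the delicate point, and the hypothesis $k \le 5$ is presumably exactly what makes the resulting polynomial inequality tractable (mirroring how $3 \le r \le 5$ enters Conjecture \ref{con1}). The "unless the nonzero entries are equal" clause is inherited from the strictness in Theorem \ref{tub} together with strictness in the scalar comparison away from the boundary.
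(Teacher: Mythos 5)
Your opening observation is a genuinely nice one and quite different from the paper's route: since $\partial S_k(\mathbf{x})/\partial x_1 = S_{k-1}(x_2,\dots,x_n) = s^{k-1}S_{k-1}(\mathbf{y})$ with $s=1-x_1$ and $\mathbf{y}=(x_2,\dots,x_n)/s$, one could hope to bootstrap Theorem~\ref{tub} at level $k-1$ applied to $\mathbf{y}$, and the identity $\sigma(\mathbf{x})=x_1^{x_1}s^s\sigma(\mathbf{y})^s$ you derive is correct and is exactly what would make the bookkeeping work. The paper instead expands $\partial S_k/\partial x_1$ all the way down via~(\ref{MR}) into a polynomial in the power sums $q,p,t$ and in $x_1$, then eliminates the power sums one at a time by carefully tuned Jensen/Chebyshev estimates (replacing them by powers of $\sigma$), and finally eliminates $x_1$ by a monotonicity argument in a single auxiliary variable. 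The two strategies are not the same; yours would be structurally cleaner if it could be pushed through, since it reuses Theorem~\ref{tub} rather than redoing the analysis.

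However, there are two genuine gaps and one error in the plan. First, Theorem~\ref{tub} at level $k-1$ requires $\left\vert\mathbf{y}\right\vert_{\max}\le 1/(4(k-3))$. Under the hypothesis $x_1\le 1/k$, all you can say is $\left\vert\mathbf{y}\right\vert_{\max}=x_2/s\le x_1/s\le 1/(k-1)$, and $1/(k-1)$ exceeds $1/(4(k-3))$ for both $k=4$ ($1/3>1/4$) and $k=5$ ($1/4>1/8$); and for $k=3$ the theorem does not even exist at level $2$ (its hypothesis is $k\ge 3$), so you would need a separate elementary base case such as $S_2(\mathbf{y})=(1-q(\mathbf{y}))/2\le(1-\sigma(\mathbf{y}))/2$. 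Worse, your proposed patch for the regime where Theorem~\ref{tub} fails rests on the claim ``if some entry of $\mathbf{y}$ is large, then $x_1$ itself must be small,'' and this is backwards: $\left\vert\mathbf{y}\right\vert_{\max}=x_2/s\le x_1/(1-x_1)$ is \emph{increasing} in $x_1$, so the excluded regime $\left\vert\mathbf{y}\right\vert_{\max}>1/(4(k-3))$ forces $x_1>1/(4k-11)$ --- i.e.\ $x_1$ near its upper bound $1/k$. That is precisely the near-extremal regime (where equality is attained at $x_1=1/k$), so a ``crude Maclaurin/Newton bound'' cannot close it. Second, the scalar comparison $s^{k-1}\sigma_{\mathbf{y}}^{k-1}\binom{1/\sigma_{\mathbf{y}}}{k-1}\le k\sigma^k\binom{1/\sigma}{k}$, with $\sigma=x_1^{x_1}s^s\sigma_{\mathbf{y}}^s$, is stated as the remaining obstacle but not proved; it is a nontrivial two-parameter inequality that is tight along the curve $\sigma_{\mathbf{y}}=x_1/s$ (equal nonzero entries), and establishing it would take roughly as much work as the paper's own computations. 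As written, the proposal is an outline with the hard analytic core left open.
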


The rest of the paper is structured as follows: in Section \ref{sym}, we give
some results about symmetric functions and prove Theorems \ref{tub},
\ref{tlb}, and \ref{tlub}; at the end of the section we discuss possible
extensions of these theorems. In Section \ref{pfc}, we prove an upper bound on
$\mu\left(  G\right)  $ and then prove Conjecture \ref{con1} if $3\leq r\leq5$
or $t\geq4\left(  r-1\right)  \left(  r-2\right)  $.

\section{\label{sym}Some bounds on elementary symmetric functions}

Let $\mathbf{x}:=\left(  x_{1},\ldots,x_{n}\right)  \in\Delta^{n-1}$, and
assume hereafter that $x_{1}\geq\cdots\geq x_{n}.$ Set
\[
p\left(  \mathbf{x}\right)  :=x_{1}^{3}+\cdots+x_{n}^{3}\text{ \ \ and
\ \ }t\left(  \mathbf{x}\right)  :=x_{1}^{4}+\cdots+x_{n}^{4}.
\]
Whenever $\mathbf{x}$ is understood, we shorten $S_{k}\left(  \mathbf{x}%
\right)  ,$ $q\left(  \mathbf{x}\right)  ,$ $p\left(  \mathbf{x}\right)  ,$
$t\left(  \mathbf{x}\right)  ,$ $\sigma\left(  \mathbf{x}\right)  $ to
$S_{k},$ $q,$ $p,$ $t,$ $\sigma$.

We start with a few basic inequalities about $\sigma,$ $q,$ $p,$ $t,$ and
$\left\vert \mathbf{x}\right\vert _{\max}$.

\begin{proposition}
\label{pmi}Let $\mathbf{x}\in\Delta^{n-1}$, and let $n^{\prime}$ be the number
of nonzero entries of $\mathbf{x}$. Then%
\[
1/n^{\prime}<\sigma<q<p^{1/2}<t^{1/3}<\left\vert \mathbf{x}\right\vert _{\max
},
\]
unless the nonzero entries of $\mathbf{x}$ are equal, in which case,
equalities hold throughout.
\end{proposition}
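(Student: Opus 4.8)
The plan is to establish the chain of inequalities one link at a time, in each case reducing to a single convexity or power-mean comparison, and then tracking the equality condition. Throughout, write $n'$ for the number of nonzero entries and, by reindexing, assume $x_1\ge\cdots\ge x_{n'}>0=x_{n'+1}=\cdots=x_n$; note $x_1+\cdots+x_{n'}=1$. The rightmost three inequalities $q<p^{1/2}<t^{1/3}<|\mathbf x|_{\max}$ are the simplest: for $1\le i\le n'$ one has $x_i\le x_1=|\mathbf x|_{\max}$, so $x_i^{k+1}\le x_1 x_i^k$, and summing gives $x_1^{\,k+1}+\cdots\le|\mathbf x|_{\max}\cdot(x_1^k+\cdots)$, i.e. $t\le|\mathbf x|_{\max}\,p$, $p\le|\mathbf x|_{\max}\,q$, and $q\le|\mathbf x|_{\max}\,(x_1+\cdots+x_{n'})=|\mathbf x|_{\max}$; equivalently $q\le p^{1/2}\le t^{1/3}\le|\mathbf x|_{\max}$ after taking roots. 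Here equality in any one step forces $x_i=x_1$ for every $i$ with $x_i>0$, i.e.\ all nonzero entries equal; and if the nonzero entries are all equal to $1/n'$ then every quantity in the chain equals $1/n'$ and all inequalities become equalities. Actually it is cleanest to get $q\le p^{1/2}\le t^{1/3}$ directly from the power-mean inequality (the $\ell^2,\ell^3,\ell^4$ norms of $(x_1,\dots,x_{n'})$ are nondecreasing), with the same equality case.

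Next I would handle $1/n'<\sigma<q$. For the left inequality, $\sigma=\prod x_i^{x_i}$, so $\log\sigma=\sum x_i\log x_i$; by convexity of $u\mapsto u\log u$ (Jensen, weights $1/n'$ on the points $x_i$, or equivalently the AM–GM / Gibbs inequality), $\sum x_i\log x_i\ge \bigl(\sum x_i\bigr)\log\bigl(\tfrac1{n'}\sum x_i\bigr)=\log(1/n')$, with equality iff all the $x_i$ are equal. For $\sigma<q$: since $\log q=\log\sum x_i^2$ and $\sum x_i=1$, write $q=\sum x_i\cdot x_i$, a weighted average of the values $x_i$ with weights $x_i$; then $\log q=\log\bigl(\sum x_i x_i\bigr)\ge \sum x_i\log x_i=\log\sigma$ by Jensen applied to the \emph{concave} function $\log$ with weights $x_i$ (which sum to $1$) at the points $x_i$, with equality iff all nonzero $x_i$ coincide. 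Hence $\sigma\le q$ with the stated equality condition.

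The one remaining link, $q<p^{1/2}$, I already absorbed into the power-mean step above, so the chain is complete once the equality analyses are assembled: if the nonzero entries are all equal they must each equal $1/n'$, and one checks directly $\sigma=q=p^{1/2}=t^{1/3}=|\mathbf x|_{\max}=1/n'$, so equality holds throughout; conversely, equality in \emph{any} single link forces (by the respective equality case) all nonzero entries to be equal. The main thing to be careful about—rather than any real obstacle—is the bookkeeping of strictness: each inequality is strict \emph{unless} the nonzero entries are equal, and since a single equality already forces that, the ``all equal'' dichotomy in the statement follows. A minor subtlety is the convention $0^0=1$, which guarantees the zero entries contribute a factor $1$ to $\sigma$ and otherwise play no role, so restricting attention to the $n'$ positive coordinates is legitimate; it is also worth noting the degenerate case $n'=1$, where $\mathbf x$ has a single nonzero entry equal to $1$ and every quantity equals $1$, consistent with the equality clause.
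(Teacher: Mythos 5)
Most of your chain is fine and matches the paper: the bound $\sigma>1/n'$ from convexity of $u\mapsto u\log u$, the bound $\sigma<q$ from concavity of $\log$, and the bound $t^{1/3}<|\mathbf x|_{\max}$ from $x_i^4\le x_1 x_i^3$ are all correct and essentially identical to what the paper does. The gap is in the two links $q<p^{1/2}<t^{1/3}$. Your first attempt establishes $t\le x_1 p$, $p\le x_1 q$, $q\le x_1$, which after chaining gives only $q\le x_1$, $p^{1/2}\le x_1$, $t^{1/3}\le x_1$; it does \emph{not} give $q\le p^{1/2}$ or $p^{1/2}\le t^{1/3}$. (For example $q^2\le p$ cannot be extracted from $p\le x_1 q$ and $q\le x_1$ in general.) The ``equivalently after taking roots'' step is therefore a non sequitur.

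Your fallback to ``the $\ell^2,\ell^3,\ell^4$ norms of $(x_1,\dots,x_{n'})$ are nondecreasing'' is also incorrect on two counts. First, for a fixed finite vector the $\ell^p$ norms are \emph{nonincreasing} in $p$ ($\|x\|_2\ge\|x\|_3\ge\|x\|_4$), so the monotonicity direction is backwards. Second, the quantities $q,\,p^{1/2},\,t^{1/3}$ are not the $\ell^2,\ell^3,\ell^4$ norms of $\mathbf x$ (those would be $q^{1/2},\,p^{1/3},\,t^{1/4}$), so even the correctly-stated norm monotonicity would not give the required chain. What is actually needed, and what the paper uses, is the \emph{weighted} power-mean (equivalently Jensen) inequality with the entries $x_i$ serving as probability weights: since $\sum x_i=1$,
\[
q=\sum_i x_i\cdot x_i\;\le\;\Bigl(\sum_i x_i\cdot x_i^2\Bigr)^{1/2}=p^{1/2}\;\le\;\Bigl(\sum_i x_i\cdot x_i^3\Bigr)^{1/3}=t^{1/3},
\]
with strict inequalities unless all $x_i$ with $x_i>0$ are equal. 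Fixing these two links in that way completes the proof; the remainder of your argument, including the equality bookkeeping, is sound.
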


\begin{proof}
Without loss of generality, assume that $\mathbf{x}$ is positive, and that the
entries of $\mathbf{x}$ are not all equal. First, the function $x^{c}$ is
strictly convex for $x>0$ and $c>1$; hence, Jensen's inequality implies that%
\[%
%TCIMACRO{\tsum \limits_{i=1}^{n}}%
%BeginExpansion
{\textstyle\sum\limits_{i=1}^{n}}
%EndExpansion
x_{i}x_{i}<\left(
%TCIMACRO{\tsum \limits_{i=1}^{n}}%
%BeginExpansion
{\textstyle\sum\limits_{i=1}^{n}}
%EndExpansion
x_{i}x_{i}^{2}\right)  ^{1/2}<\left(
%TCIMACRO{\tsum \limits_{i=1}^{n}}%
%BeginExpansion
{\textstyle\sum\limits_{i=1}^{n}}
%EndExpansion
x_{i}x_{i}^{3}\right)  ^{1/3},
\]
yielding $q<p^{1/2}<t^{1/3}$. Likewise, since the function $\log
x$\footnote{Here and elsewhere $\log$ stands for \textquotedblleft logarithm
base $e$\textquotedblright.} is strictly concave for $x>0$, we see that
\[
\log\sigma=%
%TCIMACRO{\tsum \limits_{i=1}^{n}}%
%BeginExpansion
{\textstyle\sum\limits_{i=1}^{n}}
%EndExpansion
x_{i}\log x_{i}<\log\left(
%TCIMACRO{\tsum \limits_{i=1}^{n}}%
%BeginExpansion
{\textstyle\sum\limits_{i=1}^{n}}
%EndExpansion
x_{i}x_{i}\right)  =\log q,
\]
yielding $\sigma<q$. Further, since the function $x\log x$ is strictly convex
for $x>0$, we see that%
\[
\log\sigma=x_{1}\log x_{1}+\cdots+x_{n}\log x_{n}>n\left(  \frac{1}{n}%
\log\frac{1}{n}\right)  =\log1/n,
\]
yielding $\sigma>1/n$. Finally,
\[
t=x_{1}^{4}+\cdots+x_{n}^{4}<\left\vert \mathbf{x}\right\vert _{\max}%
^{3}\left(  x_{1}+\cdots+x_{n}\right)  =\left\vert \mathbf{x}\right\vert
_{\max}^{3},
\]
completing the proof of Proposition \ref{pmi}.
\end{proof}

Further, note that $\partial S_{k}/\partial x_{i}$ is just the sum of all
products in $S_{k-1}$ that do not contain $x_{i}$; thus for every $i\in\left[
n\right]  $, we have%
\begin{equation}
\frac{\partial S_{k}}{\partial x_{i}}=S_{k-1}-x_{i}\frac{\partial S_{k-1}%
}{\partial x_{i}}. \label{MR}%
\end{equation}
In addition, a short argument shows that
\[
\frac{\partial S_{k}}{\partial x_{i}}-\frac{\partial S_{k}}{\partial x_{j}%
}=\left(  x_{j}-x_{i}\right)  \frac{\partial S_{k}}{\partial x_{i}\partial
x_{j}}.
\]
Hence, the assumption $x_{1}\geq\cdots\geq x_{n}$ implies that $\partial
S_{k}/\partial x_{1}\leq\cdots\leq\partial S_{k}/\partial x_{n}$. Moreover, we
see that $x_{i}=x_{j}$ if and only if $\partial S_{k}/\partial x_{i}=\partial
S_{k}/\partial x_{j}.$\medskip

Our proofs crucially rely on the weighted Chebyshev inequality (see, e.g.,
\cite{Bul03}, p. 161):\medskip

\textbf{Chebyshev's inequality. }\emph{Let }$\mathbf{w}:=\left(  w_{1}%
,\ldots,w_{n}\right)  \in\Delta^{n-1}$\emph{ be positive, and let }$a_{1}%
\leq\cdots\leq a_{n\text{ }}$\emph{. If }$b_{1}\geq\cdots\geq b_{n},$\emph{
then }%
\[%
%TCIMACRO{\tsum \limits_{i=1}^{n}}%
%BeginExpansion
{\textstyle\sum\limits_{i=1}^{n}}
%EndExpansion
w_{i}a_{i}b_{i}\leq%
%TCIMACRO{\tsum \limits_{i=1}^{n}}%
%BeginExpansion
{\textstyle\sum\limits_{i=1}^{n}}
%EndExpansion
w_{i}a_{i}%
%TCIMACRO{\tsum \limits_{i=1}^{n}}%
%BeginExpansion
{\textstyle\sum\limits_{i=1}^{n}}
%EndExpansion
w_{i}b_{i}.
\]
\emph{If }$b_{1}\leq\cdots\leq b_{n}$\emph{, then the opposite inequality
holds. In both cases equality holds if and only if }$a_{1}=\cdots=a_{n}$\emph{
or }$b_{1}=\cdots=b_{n}$\emph{.}

\subsection{Two recurrence inequalities}

The proof of Theorems \ref{tub} and \ref{tlb} reside on two recurrence
inequalities, stated in Propositions \ref{glb} and \ref{gub} below.

\begin{proposition}
\label{glb}If $k\geq2$ and $\mathbf{x}\in\Delta^{n-1}$, then%
\begin{equation}
kS_{k}\geq\left(  1-\left(  k-1\right)  q\right)  S_{k-1}. \label{ina}%
\end{equation}
Equality holds if and only if the nonzero entries of $\mathbf{x}$ are equal.
\end{proposition}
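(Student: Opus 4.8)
The plan is to reduce the recurrence (\ref{ina}) to a single application of Chebyshev's inequality, using Euler's identity as the bridge. Since $S_k$ is homogeneous of degree $k$, Euler's identity gives $\sum_{i=1}^n x_i\,\partial S_k/\partial x_i=kS_k$ as a polynomial identity. Substituting (\ref{MR}) and then using $x_1+\cdots+x_n=1$, I get
\[
kS_k=\sum_{i=1}^n x_i\left(S_{k-1}-x_i\frac{\partial S_{k-1}}{\partial x_i}\right)=S_{k-1}-\sum_{i=1}^n x_i^2\,\frac{\partial S_{k-1}}{\partial x_i}.
\]
Therefore (\ref{ina}) is equivalent to the inequality $\sum_{i=1}^n x_i^2\,\partial S_{k-1}/\partial x_i\le(k-1)\,q\,S_{k-1}$, and it is this reformulation I would actually prove.

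To prove it, first reduce to the case of positive $\mathbf{x}$: deleting the zero entries of $\mathbf{x}$ changes neither $S_k$, $S_{k-1}$, nor $q$, and the terms with $x_i=0$ do not contribute to $\sum x_i^2\,\partial S_{k-1}/\partial x_i$. Now apply Chebyshev's inequality with the positive weights $w_i:=x_i$ (so $\mathbf{w}\in\Delta^{n-1}$), to the non-increasing sequence $(x_1,\ldots,x_n)$ and the sequence $(\partial S_{k-1}/\partial x_1,\ldots,\partial S_{k-1}/\partial x_n)$, which is non-decreasing by the monotonicity recorded just before the statement. Since these two sequences are oppositely sorted, Chebyshev's inequality yields
\[
\sum_{i=1}^n x_i\cdot x_i\cdot\frac{\partial S_{k-1}}{\partial x_i}\le\left(\sum_{i=1}^n x_i\cdot x_i\right)\left(\sum_{i=1}^n x_i\cdot\frac{\partial S_{k-1}}{\partial x_i}\right)=q\cdot(k-1)S_{k-1},
\]
where the final equality uses $\sum x_i^2=q$ and Euler's identity applied to $S_{k-1}$, namely $\sum x_i\,\partial S_{k-1}/\partial x_i=(k-1)S_{k-1}$. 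This is precisely the inequality needed, so (\ref{ina}) follows.

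For the equality case, the equality clause of Chebyshev's inequality forces either $x_1=\cdots=x_n$ or $\partial S_{k-1}/\partial x_1=\cdots=\partial S_{k-1}/\partial x_n$; by the remark recorded before the statement, the latter is equivalent to $x_1=\cdots=x_n$ as well (for $k\ge3$; the case $k=2$ is the identity $2S_2=1-q$). Hence, after the zero entries are dropped, equality holds exactly when all $x_i$ coincide, i.e.\ when the nonzero entries of $\mathbf{x}$ are equal, and the converse is immediate. I do not anticipate a serious obstacle: the one genuine idea is that Euler's identity together with (\ref{MR}) turns the statement into a Chebyshev inequality with weights $x_i$, and everything after that is routine, including checking that the degenerate configurations with very few nonzero entries cause no trouble.
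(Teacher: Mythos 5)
Your proof is correct and follows essentially the same route as the paper: rewrite $kS_k$ via Euler's identity and the recurrence (\ref{MR}) to isolate $\sum x_i^2\,\partial S_{k-1}/\partial x_i$, then apply the weighted Chebyshev inequality with weights $w_i=x_i$ and the oppositely sorted sequences $(x_i)$ and $(\partial S_{k-1}/\partial x_i)$. One small improvement in your write-up is that you explicitly flag the degenerate case $k=2$, where $\partial S_1/\partial x_i\equiv 1$ and $2S_2=1-q$ holds identically, so the equality clause needs $k\ge3$; the paper's proof glosses over this.
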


\begin{proof}
Without loss of generality, we assume that $\mathbf{x}$ is positive, for
dropping out its zero entries does not alter $S_{2},\ldots,S_{n},$ and $q$.

First, multiplying equation (\ref{MR}) by $x_{i}$ and summing the results, we
get%
\[
kS_{k}=%
%TCIMACRO{\tsum \limits_{i=1}^{n}}%
%BeginExpansion
{\textstyle\sum\limits_{i=1}^{n}}
%EndExpansion
x_{i}\frac{\partial S_{k}}{\partial x_{i}}=%
%TCIMACRO{\tsum \limits_{i=1}^{n}}%
%BeginExpansion
{\textstyle\sum\limits_{i=1}^{n}}
%EndExpansion
x_{i}S_{k-1}-%
%TCIMACRO{\tsum \limits_{i=1}^{n}}%
%BeginExpansion
{\textstyle\sum\limits_{i=1}^{n}}
%EndExpansion
x_{i}^{2}\frac{\partial S_{k-1}}{\partial x_{i}}=S_{k-1}-%
%TCIMACRO{\tsum \limits_{i=1}^{n}}%
%BeginExpansion
{\textstyle\sum\limits_{i=1}^{n}}
%EndExpansion
x_{i}^{2}\frac{\partial S_{k-1}}{\partial x_{i}}.
\]
Now, let $w_{i}=a_{i}=x_{i}$ and $b_{i}=\partial S_{k-1}/\partial x_{i}$ for
all $i\in\left[  n\right]  ,$ and note that Chebyshev's inequality implies
that
\begin{equation}%
%TCIMACRO{\tsum \limits_{i=1}^{n}}%
%BeginExpansion
{\textstyle\sum\limits_{i=1}^{n}}
%EndExpansion
x_{i}x_{i}\frac{\partial S_{k-1}}{\partial x_{i}}\leq%
%TCIMACRO{\tsum \limits_{i=1}^{n}}%
%BeginExpansion
{\textstyle\sum\limits_{i=1}^{n}}
%EndExpansion
x_{i}^{2}%
%TCIMACRO{\tsum \limits_{i=1}^{n}}%
%BeginExpansion
{\textstyle\sum\limits_{i=1}^{n}}
%EndExpansion
x_{i}\frac{\partial S_{k-1}}{\partial x_{i}}=\left(  k-1\right)  qS_{k-1},
\label{ina1}%
\end{equation}
so inequality (\ref{ina}) follows.

The sufficiency of the condition for equality in (\ref{ina}) is clear, so we
only prove its necessity. If equality holds in (\ref{ina}), then equality
holds in (\ref{ina1}). Hence, the condition for equality in Chebyshev's
inequality implies that
\[
x_{1}=\cdots=x_{n}\text{ \ \ or \ \ }\frac{\partial S_{k-1}}{\partial x_{1}%
}=\cdots=\frac{\partial S_{k-1}}{\partial x_{n}}.
\]
As noted above, in either case $x_{1}=\cdots=x_{n}$, completing the proof of
Proposition \ref{glb}.
\end{proof}

\medskip

\begin{proposition}
\label{gub}If $k\geq3$ and $\mathbf{x}\in\Delta^{n-1}$, then%
\begin{equation}
kS_{k}\leq S_{k-1}-\left(  q-\left(  k-2\right)  p\right)  S_{k-2}.
\label{inb}%
\end{equation}
Equality holds if and only if the nonzero entries of $\mathbf{x}$ are equal.
\end{proposition}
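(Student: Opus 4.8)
The plan is to mirror the proof of Proposition~\ref{glb}, the only change being that I unfold the recurrence (\ref{MR}) one step further, so that the power sum $p$ enters. As there, I may assume $\mathbf{x}$ is positive — deleting zero entries changes none of $S_{k-2},S_{k-1},S_k,q,p$ — and that the entries are not all equal, the equality case then being immediate.

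The first step is to obtain a closed form for $kS_k$. Applying (\ref{MR}) with index $k$ and then with index $k-1$ gives, for every $i\in[n]$,
\[
\frac{\partial S_k}{\partial x_i}=S_{k-1}-x_i\frac{\partial S_{k-1}}{\partial x_i}=S_{k-1}-x_iS_{k-2}+x_i^2\frac{\partial S_{k-2}}{\partial x_i}.
\]
Multiplying by $x_i$, summing over $i$, and using Euler's identity $\sum_i x_i\,\partial S_m/\partial x_i=mS_m$ (with $m=k$ on the left and $m=k-2$ on the right), together with $\sum_i x_i=1$ and $\sum_i x_i^2=q$, I get
\[
kS_k=S_{k-1}-qS_{k-2}+\sum_{i=1}^{n}x_i^3\,\frac{\partial S_{k-2}}{\partial x_i}.
\]
Thus (\ref{inb}) is equivalent to the single estimate $\sum_i x_i^3\,\partial S_{k-2}/\partial x_i\le(k-2)\,p\,S_{k-2}$.

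For this estimate I would apply Chebyshev's inequality with $w_i=x_i$, $a_i=x_i^2$, and $b_i=\partial S_{k-2}/\partial x_i$. The weights are positive and sum to $1$; $(a_i)$ is non-increasing because $x_1\ge\cdots\ge x_n$; and $(b_i)$ is non-decreasing by the observation recorded after (\ref{MR}) (the argument that gives $\partial S_k/\partial x_1\le\cdots\le\partial S_k/\partial x_n$ applies verbatim to $S_{k-2}$). Since $(a_i)$ and $(b_i)$ are oppositely ordered, Chebyshev yields
\[
\sum_{i=1}^{n}x_i^3\,\frac{\partial S_{k-2}}{\partial x_i}=\sum_i w_ia_ib_i\le\Big(\sum_i w_ia_i\Big)\Big(\sum_i w_ib_i\Big)=p\cdot(k-2)S_{k-2},
\]
where I used $\sum_i w_ia_i=\sum_i x_i^3=p$ and, by Euler's identity again, $\sum_i w_ib_i=\sum_i x_i\,\partial S_{k-2}/\partial x_i=(k-2)S_{k-2}$. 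Substituting this into the formula for $kS_k$ gives (\ref{inb}) at once.

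For the equality case, equality in (\ref{inb}) forces equality in the Chebyshev step; by the equality clause of Chebyshev this means either $a_1=\cdots=a_n$, whence $x_1=\cdots=x_n$, or $b_1=\cdots=b_n$, i.e.\ $\partial S_{k-2}/\partial x_1=\cdots=\partial S_{k-2}/\partial x_n$, which for $k\ge4$ again forces $x_1=\cdots=x_n$ by the remark that equal partials of an elementary symmetric function correspond to equal coordinates (for $k=3$ one has $b_i\equiv1$, reflecting that $3S_3=S_2-q+p$ holds identically); the converse is clear. I do not expect a genuine obstacle here: the whole argument is just (\ref{MR}) applied twice plus one use of Chebyshev, in complete parallel with Proposition~\ref{glb}. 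The only point needing care is the bookkeeping of signs — here the sum $\sum_i x_i^3\,\partial S_{k-2}/\partial x_i$ enters $kS_k$ with a \emph{plus} sign (the two minus signs from the successive uses of (\ref{MR}) cancel), so the Chebyshev upper bound for it converts directly into the desired upper bound on $kS_k$, whereas in Proposition~\ref{glb} the analogous sum entered with a minus and hence produced a lower bound.
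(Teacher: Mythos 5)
Your proof is correct and follows essentially the same route as the paper: unfold (\ref{MR}) twice, multiply by $x_i$ and sum to obtain $kS_k=S_{k-1}-qS_{k-2}+\sum_i x_i^3\,\partial S_{k-2}/\partial x_i$, then bound the last sum by Chebyshev with $w_i=x_i$, $a_i=x_i^2$, $b_i=\partial S_{k-2}/\partial x_i$. In fact your parenthetical for $k=3$ is sharper than the paper's equality discussion: since $S_{k-2}=S_1$ has $\partial S_1/\partial x_i\equiv 1$, the second branch of Chebyshev's equality criterion is automatically satisfied when $k=3$, and indeed (\ref{inb}) reduces to the Newton identity $3S_3=S_2-q+p$, which holds with equality for every $\mathbf{x}\in\Delta^{n-1}$; the paper's claim that equality forces the nonzero entries to be equal is therefore only valid for $k\geq4$ (a harmless slip, as only $i\geq4$ matters for the strictness in the downstream applications, but worth flagging).
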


\begin{proof}
Without loss of generality we assume that $\mathbf{x}$ is positive. As in the
proof of Proposition \ref{glb}, we see that
\[
kS_{k}=S_{k-1}-%
%TCIMACRO{\tsum \limits_{i=1}^{n}}%
%BeginExpansion
{\textstyle\sum\limits_{i=1}^{n}}
%EndExpansion
x_{i}^{2}\frac{\partial S_{k-1}}{\partial x_{i}}=S_{k-1}-%
%TCIMACRO{\tsum \limits_{i=1}^{n}}%
%BeginExpansion
{\textstyle\sum\limits_{i=1}^{n}}
%EndExpansion
x_{i}^{2}\left(  S_{k-2}-x_{i}\frac{\partial S_{k-2}}{\partial x_{i}}\right)
=S_{k-1}-qS_{k-2}+%
%TCIMACRO{\tsum \limits_{i=1}^{n}}%
%BeginExpansion
{\textstyle\sum\limits_{i=1}^{n}}
%EndExpansion
x_{i}^{3}\frac{\partial S_{k-2}}{\partial x_{i}}.
\]
Now, let $w_{i}=x_{i},$ $a_{i}=x_{i}^{2}$ and $b_{i}=\partial S_{k-2}/\partial
x_{i}$ for all $i\in\left[  n\right]  ,$ and note that Chebyshev's inequality
implies that
\begin{equation}%
%TCIMACRO{\tsum \limits_{i=1}^{n}}%
%BeginExpansion
{\textstyle\sum\limits_{i=1}^{n}}
%EndExpansion
x_{i}^{3}\frac{\partial S_{k-2}}{\partial x_{i}}=%
%TCIMACRO{\tsum \limits_{i=1}^{n}}%
%BeginExpansion
{\textstyle\sum\limits_{i=1}^{n}}
%EndExpansion
x_{i}x_{i}^{2}\frac{\partial S_{k-2}}{\partial x_{i}}\leq%
%TCIMACRO{\tsum \limits_{i=1}^{n}}%
%BeginExpansion
{\textstyle\sum\limits_{i=1}^{n}}
%EndExpansion
x_{i}^{3}%
%TCIMACRO{\tsum \limits_{i=1}^{n}}%
%BeginExpansion
{\textstyle\sum\limits_{i=1}^{n}}
%EndExpansion
x_{i}\frac{\partial S_{k-2}}{\partial x_{i}}=\left(  k-2\right)  pS_{k-2}.
\label{inb1}%
\end{equation}
so inequality (\ref{inb}) follows.

The sufficiency of the condition for equality in (\ref{inb}) is clear, so we
only prove its necessity. If equality holds in (\ref{inb}), then equality
holds in (\ref{inb1}). Hence, the condition for equality in Chebyshev's
inequality implies that
\[
x_{1}^{2}=\cdots=x_{n}^{2}\text{ \ \ or \ \ }\frac{\partial S_{k-2}}{\partial
x_{1}}=\cdots=\frac{\partial S_{k-2}}{\partial x_{n}}.
\]
In either case $x_{1}=\cdots=x_{n}$, completing the proof of Proposition
\ref{gub}.
\end{proof}

\subsection{Proofs of Theorems \ref{tub} and \ref{tlb}}

\begin{proof}
[\textbf{Proof of Theorem \ref{tub}.}]Set for short $x=x_{1}.$ Our proof
hinges on two claims:\medskip

\textbf{Claim 1. }\emph{If }$x\leq1/4\left(  k-2\right)  ,$ then for
$i=3,\ldots,k,$ we have%
\[
iS_{i}\leq S_{i-1}-\sigma\left(  1-\left(  i-2\right)  \sigma\right)
S_{i-2}.
\]

\emph{Proof.} Referring to Proposition \ref{gub}, it is enough to show that
\[
q-\left(  i-2\right)  p\geq\sigma\left(  1-\left(  i-2\right)  \sigma\right)
.
\]
To this end, let%
\[
f\left(  z\right)  :=e^{z}-\left(  i-2\right)  e^{2z},
\]
and note that $f\left(  z\right)  $ is convex whenever $e^{z}\leq1/4\left(
i-2\right)  .$ Hence, in view of $x\leq1/4\left(  k-2\right)  $,%
\[
q-\left(  i-2\right)  p=%
%TCIMACRO{\tsum \limits_{j=1}^{n}}%
%BeginExpansion
{\textstyle\sum\limits_{j=1}^{n}}
%EndExpansion
x_{j}f\left(  \log x_{j}\right)  \leq f\left(
%TCIMACRO{\tsum \limits_{j=1}^{n}}%
%BeginExpansion
{\textstyle\sum\limits_{j=1}^{n}}
%EndExpansion
x_{j}\log x_{j}\right)  =\sigma-\left(  i-2\right)  \sigma^{2},
\]
proving Claim 1.\medskip

\textbf{Claim 2. }\emph{If }$x\leq1/4\left(  k-2\right)  ,$ then for
$i=3,\ldots,k,$ we have
\begin{equation}
iS_{i}\leq\left(  1-\left(  i-1\right)  \sigma\right)  S_{i-1}. \label{reci}%
\end{equation}

\emph{Proof. }We use induction on $i$. If $i=3,$ then Claim 1 yields
\[
3S_{3}=S_{2}-\left(  \sigma-\sigma^{2}\right)  =S_{2}-\sigma\left(
1-q\right)  =\left(  1-2\sigma\right)  S_{2}\text{;}%
\]
hence, the statement holds for $i=3.$ If $i>3$, then the induction assumption
implies that
\[
\frac{\left(  i-1\right)  S_{i-1}}{1-\left(  i-2\right)  \sigma}\geq\left(
i-1\right)  S_{i-1},
\]
because $1-\left(  i-2\right)  \sigma\geq1-\left(  k-2\right)  x>0.$ Now,
Claim 1 yields
\begin{align*}
iS_{k}  &  \leq S_{i-1}-\sigma\left(  1-\left(  i-2\right)  \sigma\right)
S_{i-2}\leq S_{i-1}-\sigma\left(  1-\left(  i-2\right)  \sigma\right)
\frac{\left(  i-1\right)  S_{i-1}}{\left(  1-\left(  i-2\right)
\sigma\right)  }\\
&  =\left(  1-\left(  i-1\right)  \sigma\right)  S_{i-1},
\end{align*}
completing the induction step and the proof of Claim 2.\medskip

To finish the proof of Theorem \ref{tub}, note that $1-\left(  i-1\right)
\sigma\geq1-\left(  k-1\right)  x>0.$ Thus we can multiply inequalities
(\ref{reci}) for $i=3,\ldots,k,$ getting%
\[
\frac{k!}{2}S_{3}\cdots S_{k}\leq\left(  1-2\sigma\right)  \cdots\left(
1-\left(  k-1\right)  \sigma\right)  S_{2}\cdots S_{k-1}.
\]
Now, using the fact $2S_{2}=1-q\leq1-\sigma$, we see that
\[
S_{k}\leq\frac{1}{k!}\left(  1-\sigma\right)  \left(  1-2\sigma\right)
\cdots\left(  1-\left(  k-1\right)  \sigma\right)  =\sigma^{k}\binom{1/\sigma
}{k},
\]
as desired. The condition for equality in this inequality follows from the
conditions for equality in Proposition \ref{gub}.Theorem \ref{tub} is proved.
\end{proof}

\emph{Remark. }The proof of Theorem \ref{tub} shows that its conclusion can be
strengthened to%
\[
S_{k}\leq\frac{1}{k!}\left(  1-q\right)  \left(  1-2\sigma\right)
\cdots\left(  1-\left(  k-1\right)  \sigma\right)  .
\]
\medskip

\begin{proof}
[\textbf{Proof of Theorem \ref{tlb}.}]Proposition \ref{glb} implies that
\begin{equation}
S_{i}\geq\frac{1}{i}\left(  1-\left(  i-1\right)  q\right)  S_{i-1}
\label{lin}%
\end{equation}
for every $i=2,\ldots,k$. Since
\[
1-\left(  i-1\right)  q\geq1-\left(  k-1\right)  \left\vert \mathbf{x}%
\right\vert _{\max}>1-\frac{k-1}{k-1}=0,
\]
we can multiply inequalities (\ref{lin}) for $i=2,\ldots,k,$ obtaining%
\[
S_{2}\cdots S_{k}\geq\frac{1}{k!}S_{1}\cdots S_{k-1}\left(  1-q\right)
\cdots\left(  1-\left(  k-1\right)  q\right)  =q^{k}\binom{1/q}{k},
\]
as desired.

The condition for equality in this inequality follows from the conditions for
equality in Proposition \ref{glb}. Theorem \ref{tlb} is proved.
\end{proof}

\subsection{Proof of Theorem \ref{tlub}}

The proof of Theorem \ref{tlub} is the most involved one in this paper,
especially the case $k=5$. We give separate proofs for $k=3,4,5$, because a
compound one would be a harder read.

In all three cases we assume that $\mathbf{x}$ is positive, and set $x=x_{1}$.
The proofs of the conditions for equality are straightforward and are
omitted.\medskip

\begin{proof}
[\textbf{Proof for }$k=3$.]Using equation (\ref{MR}) and Proposition
\ref{pmi}, we find that
\[
\frac{\partial S_{3}}{\partial x_{1}}=S_{2}-x\frac{\partial S_{2}}{\partial
x_{1}}=\frac{1-q}{2}-x\left(  1-x\right)  \leq\frac{1-\sigma-2x\left(
1-x\right)  }{2}.
\]
Since $x\left(  1-x\right)  $ is increasing for $x\leq1/2,$ it follows that
$\sigma\left(  1-\sigma\right)  \leq x\left(  1-x\right)  ,$ and so
\[
\frac{\partial S_{3}}{\partial x_{1}}\leq\frac{1-\sigma-2x\left(  1-x\right)
}{2}\leq\frac{1-\sigma-2\sigma\left(  1-\sigma\right)  }{2}=3\sigma^{3}%
\binom{1/\sigma}{3}.
\]
Theorem \ref{tlub} is proved for $k=3.$
\end{proof}

\medskip

\begin{proof}
[\textbf{Proof for }$k=4$.]Equation (\ref{MR}) implies that%
\begin{align*}
\frac{\partial S_{4}}{\partial x_{1}}  &  =S_{3}-x\frac{\partial S_{3}%
}{\partial x_{1}}=S_{3}-x\left(  S_{2}-x\frac{\partial S_{2}}{\partial x_{1}%
}\right)  =\frac{1-3q+2p}{6}-x\left(  \frac{1-q}{2}-x+x^{2}\right) \\
&  =\frac{1}{6}\left(  1-3q+2p-3x\left(  1-q\right)  +6x^{2}-6x^{3}\right)
\end{align*}

To establish (\ref{inr3}), we prove a chain of inequalities, consecutively
eliminating the parameters $p,$ $q,$ and $x$ from the right side of the above equation.

First, note that the function
\[
f\left(  z\right)  =e^{z}-e^{2z}%
\]
is convex whenever $e^{z}\leq1/4.$ Hence,%
\[
q-p=%
%TCIMACRO{\tsum \limits_{i=1}^{n}}%
%BeginExpansion
{\textstyle\sum\limits_{i=1}^{n}}
%EndExpansion
x_{i}f\left(  \log x_{i}\right)  \geq f\left(
%TCIMACRO{\tsum \limits_{i=1}^{n}}%
%BeginExpansion
{\textstyle\sum\limits_{i=1}^{n}}
%EndExpansion
x_{i}\log x_{i}\right)  =\sigma-\sigma^{2},
\]
yielding in turn%
\begin{align*}
6\frac{\partial S_{4}}{\partial x_{1}}  &  \leq1-q-2\sigma+2\sigma
^{2}-3x\left(  1-q\right)  +6x^{2}-6x^{3}\\
&  =1-q\left(  1-3x\right)  -2\sigma+2\sigma^{2}-3x+6x^{2}-6x^{3}.
\end{align*}
Since $1-3x>0$ and $q\leq\sigma,$ we get%
\[
6\frac{\partial S_{4}}{\partial x_{1}}\leq1-3x\sigma-3\sigma+2\sigma
^{2}-3x+6x^{2}-6x^{3}.
\]
To finish the proof, we have to show that
\[
1-3x\sigma-3\sigma+2\sigma^{2}-3x+6x^{2}-6x^{3}\leq1-6\sigma+11\sigma
^{2}-6\sigma^{3}=24\sigma^{4}\binom{1/\sigma}{4},
\]
which, after some algebra, turns out to be equivalent to
\[
\left(  x-\sigma\right)  \left(  3x-2x^{2}+2\sigma-2x\sigma-2\sigma
^{2}\right)  \leq\left(  x-\sigma\right)  .
\]
Using the fact $x\leq\sigma,$ if suffice to prove that
\[
3x-2x^{2}+2\sigma-2x\sigma-2\sigma^{2}<1.
\]
However, $2\sigma-2x\sigma-2\sigma^{2}$ increases in $\sigma$ because
$\sigma\leq x\leq1/4<\left(  1-x\right)  /2.$ Hence,%
\[
3x-2x^{2}+2\sigma-2x\sigma-2\sigma^{2}\leq5x-6x^{2}\leq\frac{5}{4}-\frac
{6}{16}<1.
\]
Theorem \ref{tlub} is proved for $k=4.$
\end{proof}

\medskip

\begin{proof}
[\textbf{Proof for} $k=5$.]Equation (\ref{MR}) implies that%
\begin{align}
\frac{\partial S_{5}}{\partial x_{1}}  &  =S_{4}-x\frac{\partial S_{4}%
}{\partial x_{1}}=S_{4}-xS_{3}+x^{2}\frac{\partial S_{3}}{\partial x_{1}%
}=S_{4}-xS_{3}+x^{2}S_{2}-x^{3}\frac{\partial S_{2}}{\partial x_{1}%
}\nonumber\\
&  =\frac{1-6q+3q^{2}+8p-6t}{24}-x\frac{1-3q+2p}{6}+x^{2}\frac{1-q}{2}%
-x^{3}\left(  1-x\right) \nonumber\\
&  =\frac{1}{24}\left(  1-6q+3q^{2}+8\left(  1-x\right)  p-6t+12xq-12x^{2}%
q-4x+12x^{2}-24x^{3}+24x^{4}\right)  . \label{me5}%
\end{align}

To establish (\ref{inr3}), we prove a chain of inequalities, consecutively
eliminating the parameters $t,p,q,$ and $x$ from the right side of equation
(\ref{me5})\footnote{The reader may find that the tradeoffs between the stages
of the calculations result in weird numbers, but we were not able to spare
much room for elegance.}.

For a start, the following claim is used to eliminate $p$ and $t$:\medskip

\textbf{Claim 1 }\emph{If }$x<1/5,$ \emph{then }%
\begin{equation}
-\frac{106-160x}{25}q+8\left(  1-x\right)  p-6t\leq-\frac{106-160x}{25}%
\sigma+8\left(  1-x\right)  \sigma^{2}-6\sigma^{3}. \label{insig}%
\end{equation}

\emph{Proof.} Claim 1 follows from the fact that the function
\[
f\left(  z\right)  :=-\frac{106-160x}{25}e^{z}+8\left(  1-x\right)
e^{2z}-6e^{3z}%
\]
is concave whenever $e^{z}\leq1/5$. To verify this fact, we show that
\begin{equation}
f^{\prime\prime}\left(  z\right)  =e^{z}\left(  -\frac{106-160x}{25}+32\left(
1-x\right)  e^{z}-54e^{2z}\right)  \leq0. \label{concf}%
\end{equation}
Indeed, the expression $32\left(  1-x\right)  e^{z}-54e^{2z}$ is quadratic in
$e^{z}$, and thus it increases in $z$ whenever $e^{z}\leq8\left(  1-x\right)
/27$. On the other hand,
\[
e^{z}\leq1/5\leq8\left(  1-x\right)  /27,
\]
implying that
\[
32\left(  1-x\right)  e^{z}-54e^{2z}\leq\frac{32\left(  1-x\right)  }{5}%
-\frac{54}{25}=\frac{106-160x}{25}.
\]
The latter inequality clearly entails (\ref{concf}); therefore, $f\left(
z\right)  $ is concave.

Now, the concavity of $f\left(  z\right)  $ implies that%
\begin{align*}
-\frac{106-160x}{25}q+8\left(  1-x\right)  p-6t  &  =\sum x_{i}f\left(  \log
x_{i}\right)  \leq f\left(  x_{i}\log x_{i}\right) \\
&  =-\frac{106-160x}{25}\sigma+8\left(  1-x\right)  \sigma^{2}-6\sigma^{3},
\end{align*}
completing the proof of Claim 1.\medskip

To use Claim 1 we add and subtract the term $\frac{106-160x}{25}q$ in the
right side of (\ref{me5}), and note that
\[
-6q+\frac{106-160x}{25}q+12xq-12x^{2}q=-\frac{44-130x+300x^{2}}{25}q+\frac
{2}{5}xq.
\]
Thus, summarizing the current progress, Claim 1 implies that
\begin{align}
24\frac{\partial S_{5}}{\partial x_{1}}  &  \leq1-\frac{44-130x+300x^{2}}%
{25}q+3q^{2}+\frac{2}{5}xq-4x+12x^{2}-24x^{3}+24x^{4}\label{in1}\\
&  -\frac{106-160x}{25}\sigma+8\left(  1-x\right)  \sigma^{2}-6\sigma
^{3}.\nonumber
\end{align}

Our next goal is to eliminate $q$ in the right side of (\ref{in1}). To this
end, define the function
\[
g\left(  z\right)  :=-\frac{44-130x+300x^{2}}{25}z+3z^{2}%
\]
\medskip

\textbf{Claim 2} \emph{If }$x\leq1/5,$ \emph{then }$g\left(  q\right)  \leq
g\left(  \sigma\right)  .\medskip$

\emph{Proof. }Claim 2 follows from the fact that $g\left(  z\right)  $
decreases in $z$ whenever $z\leq x$. To prove this fact note that $g\left(
z\right)  $ is quadratic in $z$, and so it decreases whenever
\[
z\leq\frac{44-130x+300x^{2}}{150}.
\]
However, the stipulation $x\leq1/5$ entails that
\[
x\leq\frac{44-130x+300x^{2}}{150}.
\]
Thus, $g\left(  z\right)  $ decreases in $z$ whenever $z\leq x$. In
particular, the inequalities $\sigma\leq q\leq x$ imply that $g\left(
q\right)  \leq g\left(  \sigma\right)  $, completing the proof of Claim
2.\medskip

Applying Claim 2, we replace $q$ by $\sigma$ in the right side of (\ref{in1}),
and obtain
\begin{align*}
24\frac{\partial S_{5}}{\partial x_{1}}  &  \leq1-\frac{44-130x+300x^{2}}%
{25}\sigma+3\sigma^{2}-\frac{106-160x}{25}\sigma+8\left(  1-x\right)
\sigma^{2}-6\sigma^{3}+\frac{2}{5}xq\\
&  -4x+12x^{2}-24x^{3}+24x^{4}\\
&  =1-\frac{150-290x+300x^{2}}{25}\sigma+11\sigma^{2}-8x\sigma^{2}-6\sigma
^{3}-4x+\frac{62}{5}x^{2}-24x^{3}+24x^{4}\\
&  =1-6\sigma+11\sigma^{2}-6\sigma^{3}+\frac{58}{5}x\sigma-12x^{2}%
\sigma-8x\sigma^{2}-4x+\frac{62}{5}x^{2}-24x^{3}+24x^{4}.
\end{align*}
In the above derivation we also use the inequality $\frac{2}{5}xq\leq\frac
{2}{5}x^{2}$.

Therefore, to finish the proof of (\ref{inr3}), it remains to show that
\begin{align*}
&  1-6\sigma+11\sigma^{2}-6\sigma^{3}+\frac{58}{5}x\sigma-12x^{2}%
\sigma-8x\sigma^{2}-4x+\frac{62}{5}x^{2}-24x^{3}+24x^{4}\\
&  \leq1-10\sigma+35\sigma^{2}-50\sigma^{3}+24\sigma^{4}=120\sigma^{5}%
\binom{1/\sigma}{5},
\end{align*}
which is equivalent to%
\[
\frac{58}{5}x\sigma-12x^{2}\sigma-8x\sigma^{2}-4x+\frac{62}{5}x^{2}%
-24x^{3}+24x^{4}\leq-4\sigma+24\sigma^{2}-44\sigma^{3}+24\sigma^{4}.
\]
After rearranging and factoring $\left(  x-\sigma\right)  $ out, we get
\begin{align*}
4\left(  x-\sigma\right)   &  \geq\frac{58}{5}\left(  x-\sigma\right)
\sigma+\frac{62}{5}\left(  x-\sigma\right)  \left(  x+\sigma\right) \\
&  -12\left(  x-\sigma\right)  \left(  x+\sigma\right)  \sigma-8\left(
x-\sigma\right)  \sigma^{2}-24\left(  x-\sigma\right)  \left(  x^{2}%
+x\sigma+\sigma^{2}\right) \\
&  +24\left(  x-\sigma\right)  \left(  x^{3}+x^{2}\sigma+x\sigma^{2}%
+\sigma^{3}\right)  .
\end{align*}
Since $x\geq\sigma$, it suffices to show that%
\begin{align}
2  &  >\frac{29}{5}\sigma+\frac{31}{5}\left(  x+\sigma\right)  -6\left(
x+\sigma\right)  \sigma-4\sigma^{2}-12\left(  x^{2}+x\sigma+\sigma^{2}\right)
+12\left(  x^{3}+x^{2}\sigma+x\sigma^{2}+\sigma^{3}\right) \nonumber\\
&  =\frac{31}{5}x-12x^{2}+12x^{3}+12\sigma^{3}-\left(  22-12x\right)
\sigma^{2}+\left(  12-18x+12x^{2}\right)  \sigma. \label{exp2}%
\end{align}
To this end, set
\[
h\left(  z\right)  :=12z^{3}-\left(  22-12x\right)  z^{2}+\left(
12-18x+12x^{2}\right)  z,
\]
and note that
\[
h^{\prime}\left(  z\right)  =36z^{2}-2\left(  22-12x\right)  z+\left(
12-18x+12x^{2}\right)  .
\]
Since $h^{\prime}\left(  z\right)  $ is quadratic in $z$ and $36>0,$ we see
that $h\left(  z\right)  $ is increasing if $z\leq z_{\min}$, where $z_{\min}$
is the smaller root of the equation $h^{\prime}\left(  z\right)  =0.$ However,
the stipulation $x\leq1/5$ easily implies that
\[
z_{\min}=\frac{22-12x-\sqrt{\left(  22-12x\right)  ^{2}-36\left(
12-18x+12x^{2}\right)  }}{36}>x,
\]
and therefore $h\left(  z\right)  $ is increasing in $z$ if $z\leq x$.

Thus, in view of $\sigma\leq x\leq1/5,$ we find that
\begin{align*}
12\sigma-18x\sigma-22\sigma^{2}+12x^{2}\sigma+12x\sigma^{2}+12\sigma^{3}  &
=h\left(  \sigma\right) \\
&  \leq h\left(  x\right)  =12x-40x^{2}+36x^{3}.
\end{align*}
Finally, for the right side of (\ref{exp2}) we obtain
\[
\frac{31}{5}x-12x^{2}+12x^{3}+12\sigma^{3}-\left(  22-12x\right)  \sigma
^{2}+\left(  12-18x+12x^{2}\right)  \sigma\leq\frac{91}{5}x-52x^{2}%
+48x^{3}<2.
\]
Theorem \ref{tlub} is proved for $k=5$.
\end{proof}

\subsection{Closing remarks}

The restriction on $\left\vert \mathbf{x}\right\vert _{\max}$ in Theorem
\ref{tub} can be somewhat relaxed. For example, for $r=3$ it is enough to
require that $\left\vert \mathbf{x}\right\vert _{\max}\leq3/8,$ while for
$r=4$ it is enough to have $\left\vert \mathbf{x}\right\vert _{\max}\leq
11/48$. It is challenging to find the weakest possible restriction on
$\left\vert \mathbf{x}\right\vert _{\max}$ for the conclusion of Theorem
\ref{tub} to hold.

It is unlikely that Theorem \ref{tlub} remains valid as is for sufficiently
large $r$; even the case $r=6$ is a challenge. Thus, it is interesting what
alterations are necessary to prove Conjecture \ref{con1} for $r>5$. Here is a
possibility for some progress:

Given $\mathbf{x}\in\Delta^{n-1}$ and real $t\geq-1$, define%
\[
\varphi_{t}\left(  \mathbf{x}\right)  :=\left\{
\begin{array}
[c]{cc}%
\left(
%TCIMACRO{\tsum \limits_{i=1}^{n}}%
%BeginExpansion
{\textstyle\sum\limits_{i=1}^{n}}
%EndExpansion
x_{i}^{1+t}\right)  ^{1/t}\text{,} & \text{if }x\neq0\text{;}\\
\sigma\left(  \mathbf{x}\right)  , & \text{if }x=0.
\end{array}
\right.
\]
Note that if $\mathbf{x}$ is fixed and $t\in\left[  -1,\infty\right)  $, the
function $\varphi_{t}\left(  \mathbf{x}\right)  $ is continuous and
nondecreasing in $t.$ In addition, assuming that $0^{0}=1,$ we see that
\[
\varphi_{1}\left(  \mathbf{x}\right)  =q\left(  \mathbf{x}\right)  \text{;
\ \ \ }\varphi_{-1}\left(  \mathbf{x}\right)  =1/n\text{; \ \ \ }\lim
{}_{t\rightarrow\infty}\varphi_{t}\left(  \mathbf{x}\right)  =\left\vert
x\right\vert _{\max}.
\]

It seems possible to extend Theorems \ref{tub}, \ref{tlb}, and \ref{tlub}
using $\varphi_{t}\left(  \mathbf{x}\right)  $ instead of $q$ and $\sigma$.

\section{\label{pfc}Proof of Conjecture \ref{con1} for $3\leq r\leq5$ or
$t\geq4\left(  r-1\right)  \left(  r-2\right)  $}

Let $G$ be an $r$-graph of order $n$. A vector $\mathbf{x}\in\Delta^{n-1}$
such that $P_{G}\left(  \mathbf{x}\right)  =\mu\left(  G\right)  $ is called
an \emph{eigenvector} to $\mu\left(  G\right)  .$

Let $\mathbf{x}:=\left(  x_{1},\ldots,x_{n}\right)  \in\Delta^{n-1}$ be an
eigenvector to $\mu\left(  G\right)  $. Using Lagrange
multipliers\footnote{This argument is known from the times of Motzkin and
Straus, so we skip the details.}, one finds that
\begin{equation}
r\mu\left(  G\right)  =\frac{\partial P_{G}\left(  \mathbf{x}\right)
}{\partial x_{i}}=%
%TCIMACRO{\tsum \limits_{\left\{  i_{1},\ldots,i_{r-1},i\right\}  \in E}}%
%BeginExpansion
{\textstyle\sum\limits_{\left\{  i_{1},\ldots,i_{r-1},i\right\}  \in E}}
%EndExpansion
x_{i_{1}}\cdots x_{i_{r-1}}, \label{eeq}%
\end{equation}
for every $i\in\left[  n\right]  $ such that $x_{i}>0.$

We start with a simple lemma, valid for any $r\geq2$:

\begin{lemma}
\label{tsin}Let $G$ be an $r$-graph of order $n$, with $m$ edges. If
$\mathbf{x}\in\Delta^{n-1}$ is an eigenvector to $\mu\left(  G\right)  $, then%
\begin{equation}
\mu\left(  G\right)  \leq m\sigma^{r}. \label{sin}%
\end{equation}

\end{lemma}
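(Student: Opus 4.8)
The plan is to bound $\mu(G)$ by comparing $P_G(\mathbf{x})$ with the weighted geometric mean $\sigma = x_1^{x_1}\cdots x_n^{x_n}$, exploiting that each monomial of $P_G$ has degree $r$ and that the weighted AM-GM (equivalently, concavity of $\log$) controls products of the $x_i$ by powers of $\sigma$. More precisely, for a single edge $e = \{i_1,\ldots,i_r\}$ I would like to say that $x_{i_1}\cdots x_{i_r} \le \sigma^r$, and then sum over the $m$ edges. This is exactly the shape of the claimed inequality $\mu(G) \le m\sigma^r$, so the whole content is the per-edge estimate.

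First I would note that if some $x_i = 0$ we may discard that coordinate: deleting a zero entry does not change $P_G$ restricted to the support, nor does it change $\sigma(\mathbf{x})$ (since $0^0 = 1$), so without loss of generality $\mathbf{x}$ is positive. Then the key step is the per-edge bound. Fix an edge $e = \{i_1,\ldots,i_r\} \in E$. By concavity of $\log$ applied with the weights $x_1,\ldots,x_n$ (which sum to $1$), one has $\log\sigma = \sum_{j} x_j \log x_j$. To get $x_{i_1}\cdots x_{i_r} \le \sigma^r$, i.e. $\log x_{i_1} + \cdots + \log x_{i_r} \le r\log\sigma = r\sum_j x_j\log x_j$, I would use the eigenvector equations. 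Indeed, multiplying the relation $r\mu(G) = \partial P_G/\partial x_i = \sum_{\{i_1,\ldots,i_{r-1},i\}\in E} x_{i_1}\cdots x_{i_{r-1}}$ by $x_i$ and summing over $i$ gives $r\mu(G) = \sum_i x_i \cdot (\partial P_G/\partial x_i) = r P_G(\mathbf{x}) = r\mu(G)$, which is consistent but not yet enough; the useful move is to take logarithms in the eigenvalue equation $x_i \cdot (\partial P_G/\partial x_i) \ge$ (something). Actually the cleaner route: since $\partial P_G/\partial x_i = r\mu(G)$ for every $i$ in the support, and $P_G(\mathbf{x}) = \frac{1}{r}\sum_i x_i \partial_i P_G = \mu(G)$, we can write $\mu(G) = \sum_{e} x_{i_1}\cdots x_{i_r}$ and separately, for each $i$, $r\mu(G) = \sum_{e \ni i} \prod_{j \in e, j\ne i} x_j$. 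Weighting the second identity by $x_i$ and using concavity of the logarithm I would aim to show $\mu(G) \le m\sigma^r$ directly.

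The cleanest version I would write: by the weighted AM–GM / Jensen inequality applied to $\log$ with weights $x_1,\dots,x_n$, for any edge $e=\{i_1,\dots,i_r\}$ we do \emph{not} have $x_{i_1}\cdots x_{i_r}\le\sigma^r$ pointwise in general, so instead I would sum first: $\mu(G) = P_G(\mathbf{x}) = \sum_{e\in E}\prod_{j\in e}x_j$. Using the eigenvector equation to write $x_i\,\partial_i P_G = r\mu(G)x_i$ and summing $\log$ of a suitable rearrangement, one obtains $\log\bigl(\mu(G)/m\bigr)\le \tfrac{1}{m}\sum_{e}\log\prod_{j\in e}x_j$ by concavity of $\log$ (Jensen over the $m$ edges with equal weights), and then $\tfrac1m\sum_e\log\prod_{j\in e}x_j=\tfrac1m\sum_j(\deg_G j)\log x_j\le r\sum_j x_j\log x_j=r\log\sigma$, where the last inequality uses $\deg_G(j)\le$ \dots — hmm, this needs $\sum_j(\deg_G j)\log x_j/(rm)\le\sum_j x_j\log x_j$, which should follow from rearrangement/Chebyshev since the eigenvector equations force the high-degree vertices to have large $x_j$. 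The main obstacle, and the step I expect to need care with, is precisely justifying this last inequality relating the degree sequence weighting to the $x_j$-weighting of $\log x_j$; I would handle it via Chebyshev's inequality (stated in the excerpt) using that $\deg_G$ and $\mathbf{x}$ are similarly ordered (a vertex with larger $x_i$ has larger $\partial_i P_G$-contribution), together with $\sum_j\deg_G(j)=rm$ and $\sum_j x_j=1$. Everything else is bookkeeping with the identity $rP_G(\mathbf{x})=\sum_i x_i\partial_i P_G(\mathbf{x})$ and the normalization $\mathbf{x}\in\Delta^{n-1}$.
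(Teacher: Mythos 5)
Your broad intuitions are sound (Jensen over edges, eigenvector equations, the identity $\tfrac1m\sum_e\log\prod_{j\in e}x_j = \tfrac1m\sum_j\deg_G(j)\log x_j$), but the chain of inequalities as written does not close, and the first link is reversed. You assert $\log\bigl(\mu(G)/m\bigr)\le \tfrac1m\sum_{e}\log\prod_{j\in e}x_j$ ``by concavity of $\log$.'' Concavity gives the opposite: $\log$ of an average is at least the average of $\log$, so with $y_e:=\prod_{j\in e}x_j$ and $\sum_e y_e=\mu$, you have $\log(\mu/m)=\log\bigl(\tfrac1m\sum_e y_e\bigr)\ge\tfrac1m\sum_e\log y_e$. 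Your second inequality $\tfrac1m\sum_e\log y_e\le r\log\sigma$ is in fact true (it reduces, via $r\mu x_j=\sum_{e\ni j}y_e$, to $\tfrac1m\sum_e\log y_e\le\tfrac1\mu\sum_e y_e\log y_e$, a Chebyshev-type correlation inequality), but it points the same way as the reversed first link, so the two estimates sandwich $\tfrac1m\sum_e\log y_e$ from above and do not combine to give $\log(\mu/m)\le r\log\sigma$.

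The paper avoids this by applying Jensen to the \emph{convex} function $t\mapsto t\log t$ rather than the concave $\log$, and to the quantities $y_e$ with equal weights $1/m$. Using the eigenvector equations $\partial P_G/\partial x_j=r\mu$ on the support, one first gets the exact identity
\[
\mu\log\sigma^{r}=r\mu\sum_j x_j\log x_j=\sum_j x_j\log x_j\cdot\frac{\partial P_G}{\partial x_j}=\sum_{e\in E}y_e\log y_e.
\]
Then convexity of $t\log t$ over the $m$ edges gives
\[
\frac1m\sum_{e}y_e\log y_e\ge\Bigl(\frac1m\sum_e y_e\Bigr)\log\Bigl(\frac1m\sum_e y_e\Bigr)=\frac{\mu}{m}\log\frac{\mu}{m},
\]
hence $\mu\log\sigma^r\ge\mu\log(\mu/m)$, which is exactly $\mu\le m\sigma^r$. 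So the correct route goes through $\sum_e y_e\log y_e$ directly (eigenvector identity plus convexity of $x\log x$), not through the unweighted average $\tfrac1m\sum_e\log y_e$; your Chebyshev step on degrees is not needed and cannot rescue the reversed Jensen step.
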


\begin{proof}
Clearly the lemma holds if $m=0$, so suppose that $m>0$. Likewise, without
loss of generality, suppose that $\mathbf{x}$ is positive. Let $\mu=\mu\left(
G\right)  $ and note that equations (\ref{eeq}) imply that
\begin{align*}
\mu\log\sigma^{r}  &  =r\mu%
%TCIMACRO{\tsum \limits_{i=1}^{n}}%
%BeginExpansion
{\textstyle\sum\limits_{i=1}^{n}}
%EndExpansion
x_{i}\log x_{i}=%
%TCIMACRO{\tsum \limits_{i=1}^{n}}%
%BeginExpansion
{\textstyle\sum\limits_{i=1}^{n}}
%EndExpansion
\frac{\partial P\left(  \mathbf{x}\right)  }{\partial x_{i}}x_{i}\log x_{i}=%
%TCIMACRO{\tsum \limits_{\left\{  i_{1},\ldots,i_{r}\right\}  \in E}}%
%BeginExpansion
{\textstyle\sum\limits_{\left\{  i_{1},\ldots,i_{r}\right\}  \in E}}
%EndExpansion
x_{i_{1}}\cdots x_{i_{r}}\left(  \log x_{i_{1}}+\cdots+\log x_{i_{r}}\right)
\\
&  =%
%TCIMACRO{\tsum \limits_{\left\{  i_{1},\ldots,i_{r}\right\}  \in E}}%
%BeginExpansion
{\textstyle\sum\limits_{\left\{  i_{1},\ldots,i_{r}\right\}  \in E}}
%EndExpansion
x_{i_{1}}\cdots x_{i_{r}}\log x_{i_{1}}\cdots x_{i_{r}}.
\end{align*}
Since the function $x\log x$ is convex, we see that
\[%
%TCIMACRO{\tsum \limits_{\left\{  i_{1},\ldots,i_{r}\right\}  \in E}}%
%BeginExpansion
{\textstyle\sum\limits_{\left\{  i_{1},\ldots,i_{r}\right\}  \in E}}
%EndExpansion
x_{i_{1}}\cdots x_{i_{r}}\log x_{i_{1}}\cdots x_{i_{r}}\geq m\left(  \frac
{\mu}{m}\right)  \log\frac{\mu}{m}=\mu\log\frac{\mu}{m}.
\]
Hence,%
\[
\mu\log\sigma^{r}\geq\mu\log\frac{\mu}{m},
\]
implying that $\mu\leq m\sigma^{r}$, as desired.
\end{proof}

\emph{Remark. }Note that $\mu\left(  G\right)  \leq mq^{r}$ is a weaker, yet
more usable consequence of bound (\ref{sin}).\medskip

With Theorem \ref{tlub} and Lemma \ref{tsin} in hand, it is not hard to prove
Conjecture \ref{con1} for $3\leq r\leq5$:$\medskip$

\begin{proof}
[\textbf{Proof of Conjecture \ref{con1} for }$3\leq r\leq5.$]Let $G$ be an
$r$-graph of order $n$ with $m$ edges, and let $\mu\left(  G\right)  =\mu
_{r}\left(  m\right)  .$ Suppose that $t>r-1$ is a real number satisfying
$m=\binom{t}{r}$. To prove Conjecture \ref{con1}, we have to show that
\begin{equation}
\mu\left(  G\right)  \leq mt^{-r}=\frac{1}{r!}\left(  1-\frac{1}{t}\right)
\cdots\left(  1-\frac{r-1}{t}\right)  .\label{exf}%
\end{equation}

Let $\mathbf{x}:=\left(  x_{1},\ldots,x_{n}\right)  \in\Delta^{n-1}$ be an
eigenvector to $\mu\left(  G\right)  $ and suppose that $x_{1}\geq\cdots\geq
x_{n}$. If $x_{n}=0$, replace $G$ with the subgraph $H\subset G$ induced by
the vertices with nonzero entries in $\mathbf{x}$. Clearly $\mu\left(
H\right)  =\mu\left(  G\right)  $, and $H$ has at most $m$ edges. Since the
right side of (\ref{exf}) decreases with $t$, it is enough to prove
(\ref{exf}) for $H$. Thus, without loss of generality, we assume that
$\mathbf{x}$ is positive.

Set for short $\mu=\mu\left(  G\right)  $, and assume for a contradiction that
(\ref{exf}) fails, that is, $mt^{-r}<\mu$. Now, Lemma \ref{tsin} implies that
$mt^{-r}<\mu\leq m\sigma^{r}$, and so $\sigma>1/t$.

On the other hand, equation (\ref{eeq}) yields
\[
r\mu x_{1}=\frac{\partial P_{G}\left(  \mathbf{x}\right)  }{\partial x_{1}%
}x_{1}\leq P_{G}\left(  \mathbf{x}\right)  =\mu\text{;}%
\]
therefore, $x_{1}\leq1/r$. With this provision, and supposing that $3\leq
r\leq5$, Theorem \ref{tlub} gives
\begin{equation}
r\mu=\frac{\partial P_{G}\left(  \mathbf{x}\right)  }{\partial x_{1}}\leq
\frac{\partial S_{r}\left(  \mathbf{x}\right)  }{\partial x_{1}}\leq
r\sigma^{r}\binom{1/\sigma}{r}. \label{ine}%
\end{equation}
Hence, in view of $\sigma>1/t$, we find that
\[
\mu\leq\frac{1}{r!}\left(  1-\sigma\right)  \cdots\left(  1-\left(
r-1\right)  \sigma\right)  <\frac{1}{r!}\left(  1-\frac{1}{t}\right)
\cdots\left(  1-\frac{r-1}{t}\right)  =mt^{-r}.
\]
This contradiction shows that $\mu\left(  G\right)  \leq mt^{-r}$.

It remains to prove the conditions for equality in Conjecture \ref{con1}.
Suppose that $\mu\left(  G\right)  =mt^{-r}$; thus, equalities hold throughout
in (\ref{ine}), and by Theorem \ref{tlub} the entries of $\mathbf{x}$ are
equal to $1/n$. Therefore, we find that
\[
\frac{1}{r!}\left(  1-\frac{1}{t}\right)  \cdots\left(  1-\frac{r-1}%
{t}\right)  =mt^{-r}=\mu\left(  G\right)  \leq\binom{n}{r}n^{-r}=\frac{1}%
{r!}\left(  1-\frac{1}{n}\right)  \cdots\left(  1-\frac{r-1}{n}\right)  ,
\]
yielding in turn $n\geq t$. Now, $mt^{-r}=\mu\left(  G\right)  \leq mn^{-r}$
implies that $t=n$.

Finally, if $t$ is an integer, then taking $G$ to be the complete $r$-graph of
order $t$ and $\mathbf{x}\in\Delta^{t-1}$ to be the vector with all entries
equal to $1/t$, we see that $\mu_{r}\left(  m\right)  =mt^{-r}$, completing
the proof of Conjecture \ref{con1} for $3\leq r\leq5$.
\end{proof}

Our proof of Conjecture \ref{con1} for $t\geq4\left(  r-1\right)  \left(
r-2\right)  $ is similar, so we omit a few details.\medskip

\begin{proof}
[\textbf{Proof of Conjecture \ref{con1} for }$t\geq4\left(  r-1\right)
\left(  r-2\right)  .$]Let $G$ be an $r$-graph of order $n$ with $m$ edges,
and let $\mu\left(  G\right)  =\mu_{r}\left(  m\right)  .$ Suppose that
$t\geq4\left(  r-1\right)  \left(  r-2\right)  $ is a real number satisfying
$m=\binom{t}{r}$. To prove Conjecture \ref{con1}, we show that $\mu\left(
G\right)  \leq mt^{-r}$. To this end, select an eigenvector $\mathbf{x}%
:=\left(  x_{1},\ldots,x_{n}\right)  $ to $\mu\left(  G\right)  $ with
$x_{1}\geq\cdots\geq x_{n}>0$.

First note that if $x_{1}>\left(  r-1\right)  /t,$ then
\[
\frac{\left(  1-x_{1}\right)  ^{r-1}}{\left(  r-1\right)  !}<\frac{1}{\left(
r-1\right)  !}\left(  1-\frac{r-1}{t}\right)  ^{r-1}<\frac{1}{\left(
r-1\right)  !}\left(  1-\frac{1}{t}\right)  \cdots\left(  1-\frac{r-1}%
{t}\right)  =rmt^{-r},
\]
and therefore%
\[
r\mu\left(  G\right)  =\frac{\partial P_{G}\left(  \mathbf{x}\right)
}{\partial x_{1}}\leq\frac{\partial S_{r}\left(  \mathbf{x}\right)  }{\partial
x_{1}}<\frac{\left(  1-x_{1}\right)  ^{r-1}}{\left(  r-1\right)  !}<rmt^{-r},
\]
proving that $\mu\left(  G\right)  <mt^{-r}$.

On the other hand, if $x_{1}\leq\left(  r-1\right)  /t,$ then the premise
$t\geq4\left(  r-1\right)  \left(  r-2\right)  $ implies that $x_{1}%
\leq1/4\left(  r-2\right)  $; therefore, Theorem \ref{tub} yields
\[
\mu\left(  G\right)  =P_{G}\left(  \mathbf{x}\right)  \leq S_{r}\left(
\mathbf{x}\right)  \leq\sigma\binom{1/\sigma}{r}.
\]
This inequality and Lemma \ref{tsin} imply that $\mu\left(  G\right)  \leq
mt^{-r}$.

The proof of the condition for equality in $\mu\left(  G\right)  \leq mt^{-r}$
is omitted.
\end{proof}

\bigskip

\textbf{Acknowledgement. }A preliminary version of this work has been reported
at the International workshop on spectral hypergraph theory held in November,
2017 at Anhui University, Hefei, P.R. China. I am grateful to the organizers,
and particularly to Prof. Yi-Zheng Fan, for wonderful experience.

\bigskip

\end{document}